\newtheorem{theorem}{Theorem}
\newtheorem{axiom}[theorem]{Axiom}
\newtheorem{conjecture}[theorem]{Conjecture}
\newtheorem{corollary}[theorem]{Corollary}
\newtheorem{definition}[theorem]{Definition}
\newtheorem{example}[theorem]{Example}
\newtheorem{exercise}[theorem]{Exercise}
\newtheorem{lemma}[theorem]{Lemma}
\newtheorem{proposition}[theorem]{Proposition}
\newtheorem{remark}[theorem]{Remark}
\newenvironment{proof}[1][Proof]{\noindent\textbf{#1.} }{\ \rule{0.5em}{0.5em}}
\let\pdfoutput=\undefined\fi
\chardef\@x10\chardef\@xv60
\def\tcitime{
\def\@time{%
  \@minute\time\@hour\@minute\divide\@hour\@xv
  \ifnum\@hour<\@x 0\fi\the\@hour:%
  \multiply\@hour\@xv\advance\@minute-\@hour
  \ifnum\@minute<\@x 0\fi\the\@minute
  }}%
\def\x@hyperref#1#2#3{%
   \catcode`\~ = 12
   \catcode`\$ = 12
   \catcode`\_ = 12
   \catcode`\# = 12
   \catcode`\& = 12
   \catcode`\% = 12
   \y@hyperref{#1}{#2}{#3}%
}
\def\y@hyperref#1#2#3#4{%
   #2\ref{#4}#3
   \catcode`\~ = 13
   \catcode`\$ = 3
   \catcode`\_ = 8
   \catcode`\# = 6
   \catcode`\& = 4
   \catcode`\% = 14
}
\def\QCTOpt[#1]#2{%
  \def\QCTOptB{#1}
  \def\QCTOptA{#2}
}
\def\QCTNOpt#1{%
  \def\QCTOptA{#1}
  \let\QCTOptB\empty
}
\def\Qct{%
  \@ifnextchar[{%
    \QCTOpt}{\QCTNOpt}
}
\def\QCBOpt[#1]#2{%
  \def\QCBOptB{#1}%
  \def\QCBOptA{#2}%
}
\def\QCBNOpt#1{%
  \def\QCBOptA{#1}%
  \let\QCBOptB\empty
}
\def\Qcb{%
  \@ifnextchar[{%
    \QCBOpt}{\QCBNOpt}%
}
\def\PrepCapArgs{%
  \ifx\QCBOptA\empty
    \ifx\QCTOptA\empty
      {}%
    \else
      \ifx\QCTOptB\empty
        {\QCTOptA}%
      \else
        [\QCTOptB]{\QCTOptA}%
      \fi
    \fi
  \else
    \ifx\QCBOptA\empty
      {}%
    \else
      \ifx\QCBOptB\empty
        {\QCBOptA}%
      \else
        [\QCBOptB]{\QCBOptA}%
      \fi
    \fi
  \fi
}
\def\GRAPHICSPS#1{%
 \ifcase\GRAPHICSTYPE
   \special{ps: #1}%
 \or
   \special{language "PS", include "#1"}%
 \fi
}%
\def\graffile#1#2#3#4{%
    \bgroup
	   \@inlabelfalse
       \leavevmode
       \@ifundefined{bbl@deactivate}{\def~{\string~}}{\activesoff}%
        \raise -#4 \BOXTHEFRAME{%
           \hbox to #2{\raise #3\hbox to #2{\null #1\hfil}}}%
    \egroup
}%
\def\draftbox#1#2#3#4{%
 \leavevmode\raise -#4 \hbox{%
  \frame{\rlap{\protect\tiny #1}\hbox to #2%
   {\vrule height#3 width\z@ depth\z@\hfil}%
  }%
 }%
}%
\let\nographics=\@msidraft
\newif\ifwasdraft
\def\GRAPHIC#1#2#3#4#5{%
   \ifnum\@msidraft=\@ne\draftbox{#2}{#3}{#4}{#5}%
   \else\graffile{#1}{#3}{#4}{#5}%
   \fi
}
\def\addtoLaTeXparams#1{%
    \edef\LaTeXparams{\LaTeXparams #1}}%
\newif\ifBoxFrame \BoxFramefalse
\newif\ifOverFrame \OverFramefalse
\newif\ifUnderFrame \UnderFramefalse
\def\BOXTHEFRAME#1{%
   \hbox{%
      \ifBoxFrame
         \frame{#1}%
      \else
         {#1}%
      \fi
   }%
}
\def\doFRAMEparams#1{\BoxFramefalse\OverFramefalse\UnderFramefalse\readFRAMEparams#1\end}%
\def\readFRAMEparams#1{%
 \ifx#1\end%
  \let\next=\relax
  \else
  \ifx#1i\dispkind=\z@\fi
  \ifx#1d\dispkind=\@ne\fi
  \ifx#1f\dispkind=\tw@\fi
  \ifx#1t\addtoLaTeXparams{t}\fi
  \ifx#1b\addtoLaTeXparams{b}\fi
  \ifx#1p\addtoLaTeXparams{p}\fi
  \ifx#1h\addtoLaTeXparams{h}\fi
  \ifx#1X\BoxFrametrue\fi
  \ifx#1O\OverFrametrue\fi
  \ifx#1U\UnderFrametrue\fi
  \ifx#1w
    \ifnum\@msidraft=1\wasdrafttrue\else\wasdraftfalse\fi
    \@msidraft=\@ne
  \fi
  \let\next=\readFRAMEparams
  \fi
 \next
 }%
\def\IFRAME#1#2#3#4#5#6{%
      \bgroup
      \let\QCTOptA\empty
      \let\QCTOptB\empty
      \let\QCBOptA\empty
      \let\QCBOptB\empty
      #6%
      \parindent=0pt
      \leftskip=0pt
      \rightskip=0pt
      \setbox0=\hbox{\QCBOptA}%
      \@tempdima=#1\relax
      \ifOverFrame
          \typeout{This is not implemented yet}%
          \show\HELP
      \else
         \ifdim\wd0>\@tempdima
            \advance\@tempdima by \@tempdima
            \ifdim\wd0 >\@tempdima
               \setbox1 =\vbox{%
                  \unskip\hbox to \@tempdima{\hfill\GRAPHIC{#5}{#4}{#1}{#2}{#3}\hfill}%
                  \unskip\hbox to \@tempdima{\parbox[b]{\@tempdima}{\QCBOptA}}%
               }%
               \wd1=\@tempdima
            \else
               \textwidth=\wd0
               \setbox1 =\vbox{%
                 \noindent\hbox to \wd0{\hfill\GRAPHIC{#5}{#4}{#1}{#2}{#3}\hfill}\\%
                 \noindent\hbox{\QCBOptA}%
               }%
               \wd1=\wd0
            \fi
         \else
            \ifdim\wd0>0pt
              \hsize=\@tempdima
              \setbox1=\vbox{%
                \unskip\GRAPHIC{#5}{#4}{#1}{#2}{0pt}%
                \break
                \unskip\hbox to \@tempdima{\hfill \QCBOptA\hfill}%
              }%
              \wd1=\@tempdima
           \else
              \hsize=\@tempdima
              \setbox1=\vbox{%
                \unskip\GRAPHIC{#5}{#4}{#1}{#2}{0pt}%
              }%
              \wd1=\@tempdima
           \fi
         \fi
         \@tempdimb=\ht1
         \advance\@tempdimb by -#2
         \advance\@tempdimb by #3
         \leavevmode
         \raise -\@tempdimb \hbox{\box1}%
      \fi
      \egroup%
}%
\def\DFRAME#1#2#3#4#5{%
  \vspace\topsep
  \hfil\break
  \bgroup
     \leftskip\@flushglue
	 \rightskip\@flushglue
	 \parindent\z@
	 \parfillskip\z@skip
     \let\QCTOptA\empty
     \let\QCTOptB\empty
     \let\QCBOptA\empty
     \let\QCBOptB\empty
	 \vbox\bgroup
        \ifOverFrame 
           #5\QCTOptA\par
        \fi
        \GRAPHIC{#4}{#3}{#1}{#2}{\z@}%
        \ifUnderFrame 
           \break#5\QCBOptA
        \fi
	 \egroup
  \egroup
  \vspace\topsep
  \break
}%
\def\FFRAME#1#2#3#4#5#6#7{%
  \@ifundefined{floatstyle}
    {
     \begin{figure}[#1]%
    }
    {
	 \ifx#1h
      \begin{figure}[H]%
	 \else
      \begin{figure}[#1]%
	 \fi
	}
  \let\QCTOptA\empty
  \let\QCTOptB\empty
  \let\QCBOptA\empty
  \let\QCBOptB\empty
  \ifOverFrame
    #4
    \ifx\QCTOptA\empty
    \else
      \ifx\QCTOptB\empty
        \caption{\QCTOptA}%
      \else
        \caption[\QCTOptB]{\QCTOptA}%
      \fi
    \fi
    \ifUnderFrame\else
      \label{#5}%
    \fi
  \else
    \UnderFrametrue%
  \fi
  \begin{center}\GRAPHIC{#7}{#6}{#2}{#3}{\z@}\end{center}%
  \ifUnderFrame
    #4
    \ifx\QCBOptA\empty
      \caption{}%
    \else
      \ifx\QCBOptB\empty
        \caption{\QCBOptA}%
      \else
        \caption[\QCBOptB]{\QCBOptA}%
      \fi
    \fi
    \label{#5}%
  \fi
  \end{figure}%
 }%
\def\makeactives{
  \catcode`\"=\active
  \catcode`\;=\active
  \catcode`\:=\active
  \catcode`\'=\active
  \catcode`\~=\active
}
   \gdef\activesoff{%
      \def"{\string"}%
      \def;{\string;}%
      \def:{\string:}%
      \def'{\string'}%
      \def~{\string~}%
    }
\def\FRAME#1#2#3#4#5#6#7#8{%
 \bgroup
 \ifnum\@msidraft=\@ne
   \wasdrafttrue
 \else
   \wasdraftfalse%
 \fi
 \def\LaTeXparams{}%
 \dispkind=\z@
 \def\LaTeXparams{}%
 \doFRAMEparams{#1}%
 \ifnum\dispkind=\z@\IFRAME{#2}{#3}{#4}{#7}{#8}{#5}\else
  \ifnum\dispkind=\@ne\DFRAME{#2}{#3}{#7}{#8}{#5}\else
   \ifnum\dispkind=\tw@
    \edef\@tempa{\noexpand\FFRAME{\LaTeXparams}}%
    \@tempa{#2}{#3}{#5}{#6}{#7}{#8}%
    \fi
   \fi
  \fi
  \ifwasdraft\@msidraft=1\else\@msidraft=0\fi{}%
  \egroup
 }%
\def\TEXUX#1{"texux"}
\def\func#1{\mathop{\rm #1}\nolimits}%
\long\def\QQQ#1#2{%
     \long\expandafter\def\csname#1\endcsname{#2}}%
\long\def\QQA#1#2{}%
\def\QTR#1#2{{\csname#1\endcsname {#2}}}%
\def\EXPAND#1[#2]#3{}%
\def\NOEXPAND#1[#2]#3{}%
\def\LaTeXparent#1{}%
\def\ChildStyles#1{}%
\def\ChildDefaults#1{}%
\def\QTagDef#1#2#3{}%
  \providecommand{\UNICODE}[2][]{\protect\rule{.1in}{.1in}}
  \providecommand{\U}[1]{\protect\rule{.1in}{.1in}}
\def\QQfnmark#1{\footnotemark}
 \def\abstract{%
  \if@twocolumn
   \section*{Abstract (Not appropriate in this style!)}%
   \else \small 
   \begin{center}{\bf Abstract\vspace{-.5em}\vspace{\z@}}\end{center}%
   \quotation 
   \fi
  }%
   \def\registered{\relax\ifmmode{}\r@gistered
                    \else$\m@th\r@gistered$\fi}%
 \def\r@gistered{^{\ooalign
  {\hfil\raise.07ex\hbox{$\scriptstyle\rm\text{R}$}\hfil\crcr
  \mathhexbox20D}}}}{}%
\newdimen\theight
\def\newfmtname{LaTeX2e}
  \DeclareOldFontCommand{\rm}{\normalfont\rmfamily}{\mathrm}
  \DeclareOldFontCommand{\sf}{\normalfont\sffamily}{\mathsf}
  \DeclareOldFontCommand{\tt}{\normalfont\ttfamily}{\mathtt}
  \DeclareOldFontCommand{\bf}{\normalfont\bfseries}{\mathbf}
  \DeclareOldFontCommand{\it}{\normalfont\itshape}{\mathit}
  \DeclareOldFontCommand{\sl}{\normalfont\slshape}{\@nomath\sl}
  \DeclareOldFontCommand{\sc}{\normalfont\scshape}{\@nomath\sc}
\def\alpha{{\Greekmath 010B}}%
\def\beta{{\Greekmath 010C}}%
\def\gamma{{\Greekmath 010D}}%
\def\delta{{\Greekmath 010E}}%
\def\epsilon{{\Greekmath 010F}}%
\def\zeta{{\Greekmath 0110}}%
\def\eta{{\Greekmath 0111}}%
\def\theta{{\Greekmath 0112}}%
\def\iota{{\Greekmath 0113}}%
\def\kappa{{\Greekmath 0114}}%
\def\lambda{{\Greekmath 0115}}%
\def\mu{{\Greekmath 0116}}%
\def\nu{{\Greekmath 0117}}%
\def\xi{{\Greekmath 0118}}%
\def\pi{{\Greekmath 0119}}%
\def\rho{{\Greekmath 011A}}%
\def\sigma{{\Greekmath 011B}}%
\def\tau{{\Greekmath 011C}}%
\def\upsilon{{\Greekmath 011D}}%
\def\phi{{\Greekmath 011E}}%
\def\chi{{\Greekmath 011F}}%
\def\psi{{\Greekmath 0120}}%
\def\omega{{\Greekmath 0121}}%
\def\varepsilon{{\Greekmath 0122}}%
\def\vartheta{{\Greekmath 0123}}%
\def\varpi{{\Greekmath 0124}}%
\def\varrho{{\Greekmath 0125}}%
\def\varsigma{{\Greekmath 0126}}%
\def\varphi{{\Greekmath 0127}}%
\def\nabla{{\Greekmath 0272}}
\def\FindBoldGroup{%
   {\setbox0=\hbox{$\mathbf{x\global\edef\theboldgroup{\the\mathgroup}}$}}%
}
\def\Greekmath#1#2#3#4{%
    \if@compatibility
        \ifnum\mathgroup=\symbold
           \mathchoice{\mbox{\boldmath$\displaystyle\mathchar"#1#2#3#4$}}%
                      {\mbox{\boldmath$\textstyle\mathchar"#1#2#3#4$}}%
                      {\mbox{\boldmath$\scriptstyle\mathchar"#1#2#3#4$}}%
                      {\mbox{\boldmath$\scriptscriptstyle\mathchar"#1#2#3#4$}}%
        \else
           \mathchar"#1#2#3#4%
        \fi 
    \else 
        \FindBoldGroup
        \ifnum\mathgroup=\theboldgroup 
           \mathchoice{\mbox{\boldmath$\displaystyle\mathchar"#1#2#3#4$}}%
                      {\mbox{\boldmath$\textstyle\mathchar"#1#2#3#4$}}%
                      {\mbox{\boldmath$\scriptstyle\mathchar"#1#2#3#4$}}%
                      {\mbox{\boldmath$\scriptscriptstyle\mathchar"#1#2#3#4$}}%
        \else
           \mathchar"#1#2#3#4%
        \fi     	    
	  \fi}
\newif\ifGreekBold  \GreekBoldfalse
\let\SAVEPBF=\pbf
\def\pbf{\GreekBoldtrue\SAVEPBF}%
  \newcounter{equationnumber}  
  \def\mathletters{%
     \addtocounter{equation}{1}
     \edef\@currentlabel{\theequation}%
     \setcounter{equationnumber}{\c@equation}
     \setcounter{equation}{0}%
     \edef\theequation{\@currentlabel\noexpand\alph{equation}}%
  }
    \def\BibTeX{{\rm B\kern-.05em{\sc i\kern-.025em b}\kern-.08em
                 T\kern-.1667em\lower.7ex\hbox{E}\kern-.125emX}}}{}%
\def\AmS{{\protect\usefont{OMS}{cmsy}{m}{n}%
                A\kern-.1667em\lower.5ex\hbox{M}\kern-.125emS}}}{}%
\def\@@eqncr{\let\@tempa\relax
    \ifcase\@eqcnt \def\@tempa{& & &}\or \def\@tempa{& &}%
      \else \def\@tempa{&}\fi
     \@tempa
     \if@eqnsw
        \iftag@
           \@taggnum
        \else
           \@eqnnum\stepcounter{equation}%
        \fi
     \fi
     \global\tag@false
     \global\@eqnswtrue
     \global\@eqcnt\z@\cr}
\def\TCItag{\@ifnextchar*{\@TCItagstar}{\@TCItag}}
\def\@TCItag#1{%
    \global\tag@true
    \global\def\@taggnum{(#1)}%
    \global\def\@currentlabel{#1}}
\def\@TCItagstar*#1{%
    \global\tag@true
    \global\def\@taggnum{#1}%
    \global\def\@currentlabel{#1}}
\def\tint{\msi@int\textstyle\int}%
\def\tiint{\msi@int\textstyle\iint}%
\def\tiiint{\msi@int\textstyle\iiint}%
\def\tiiiint{\msi@int\textstyle\iiiint}%
\def\tidotsint{\msi@int\textstyle\idotsint}%
\def\toint{\msi@int\textstyle\oint}%
\newtoks\temptoksa
\newtoks\temptoksb
\newtoks\temptoksc
\def\msi@int#1#2{%
 \def\@temp{{#1#2\the\temptoksc_{\the\temptoksa}^{\the\temptoksb}}}%
 \futurelet\@nextcs
 \@int
}
\def\@int{%
   \ifx\@nextcs\limits
      \typeout{Found limits}%
      \temptoksc={\limits}%
	  \let\@next\@intgobble%
   \else\ifx\@nextcs\nolimits
      \typeout{Found nolimits}%
      \temptoksc={\nolimits}%
	  \let\@next\@intgobble%
   \else
      \typeout{Did not find limits or no limits}%
      \temptoksc={}%
      \let\@next\msi@limits%
   \fi\fi
   \@next   
}%
\def\@intgobble#1{%
   \typeout{arg is #1}%
   \msi@limits
}
\def\msi@limits{%
   \temptoksa={}%
   \temptoksb={}%
   \@ifnextchar_{\@limitsa}{\@limitsb}%
}
\def\@limitsa_#1{%
   \temptoksa={#1}%
   \@ifnextchar^{\@limitsc}{\@temp}%
}
\def\@limitsb{%
   \@ifnextchar^{\@limitsc}{\@temp}%
}
\def\@limitsc^#1{%
   \temptoksb={#1}%
   \@ifnextchar_{\@limitsd}{\@temp}%
}
\def\@limitsd_#1{%
   \temptoksa={#1}%
   \@temp
}
\def\dint{\msi@int\displaystyle\int}%
\def\diint{\msi@int\displaystyle\iint}%
\def\diiint{\msi@int\displaystyle\iiint}%
\def\diiiint{\msi@int\displaystyle\iiiint}%
\def\didotsint{\msi@int\displaystyle\idotsint}%
\def\doint{\msi@int\displaystyle\oint}%
\def\ExitTCILatex{\makeatother }
\if@compatibility\message{amsmath already loaded}\fi\aftergroup\ExitTCILatex}
\if@compatibility\message{amstex already loaded}\fi\aftergroup\ExitTCILatex}
\if@compatibility\message{amsgen already loaded}\fi\aftergroup\ExitTCILatex}
\let\DOTSI\relax
\def\RIfM@{\relax\ifmmode}%
\def\FN@{\futurelet\next}%
\def\iint{\DOTSI\intno@\tw@\FN@\ints@}%
\def\iiint{\DOTSI\intno@\thr@@\FN@\ints@}%
\def\iiiint{\DOTSI\intno@4 \FN@\ints@}%
\def\idotsint{\DOTSI\intno@\z@\FN@\ints@}%
\def\ints@{\findlimits@\ints@@}%
\newif\iflimtoken@
\newif\iflimits@
\def\findlimits@{\limtoken@true\ifx\next\limits\limits@true
 \else\ifx\next\nolimits\limits@false\else
 \limtoken@false\ifx\ilimits@\nolimits\limits@false\else
 \ifinner\limits@false\else\limits@true\fi\fi\fi\fi}%
\def\multint@{\int\ifnum\intno@=\z@\intdots@                          
 \else\intkern@\fi                                                    
 \ifnum\intno@>\tw@\int\intkern@\fi                                   
 \ifnum\intno@>\thr@@\int\intkern@\fi                                 
 \int}
\def\multintlimits@{\intop\ifnum\intno@=\z@\intdots@\else\intkern@\fi
 \ifnum\intno@>\tw@\intop\intkern@\fi
 \ifnum\intno@>\thr@@\intop\intkern@\fi\intop}%
\def\intic@{%
    \mathchoice{\hskip.5em}{\hskip.4em}{\hskip.4em}{\hskip.4em}}%
\def\negintic@{\mathchoice
 {\hskip-.5em}{\hskip-.4em}{\hskip-.4em}{\hskip-.4em}}%
\def\ints@@{\iflimtoken@                                              
 \def\ints@@@{\iflimits@\negintic@
   \mathop{\intic@\multintlimits@}\limits                             
  \else\multint@\nolimits\fi                                          
  \eat@}
 \else                                                                
 \def\ints@@@{\iflimits@\negintic@
  \mathop{\intic@\multintlimits@}\limits\else
  \multint@\nolimits\fi}\fi\ints@@@}%
\def\intkern@{\mathchoice{\!\!\!}{\!\!}{\!\!}{\!\!}}%
\def\plaincdots@{\mathinner{\cdotp\cdotp\cdotp}}%
\def\intdots@{\mathchoice{\plaincdots@}%
 {{\cdotp}\mkern1.5mu{\cdotp}\mkern1.5mu{\cdotp}}%
 {{\cdotp}\mkern1mu{\cdotp}\mkern1mu{\cdotp}}%
 {{\cdotp}\mkern1mu{\cdotp}\mkern1mu{\cdotp}}}%
\def\RIfM@{\relax\protect\ifmmode}
\def\text{\RIfM@\expandafter\text@\else\expandafter\mbox\fi}
\let\nfss@text\text
\def\text@#1{\mathchoice
   {\textdef@\displaystyle\f@size{#1}}%
   {\textdef@\textstyle\tf@size{\firstchoice@false #1}}%
   {\textdef@\textstyle\sf@size{\firstchoice@false #1}}%
   {\textdef@\textstyle \ssf@size{\firstchoice@false #1}}%
   \glb@settings}
\def\textdef@#1#2#3{\hbox{{%
                    \everymath{#1}%
                    \let\f@size#2\selectfont
                    #3}}}
\newif\iffirstchoice@
\def\Let@{\relax\iffalse{\fi\let\\=\cr\iffalse}\fi}%
\def\vspace@{\def\vspace##1{\crcr\noalign{\vskip##1\relax}}}%
\def\multilimits@{\bgroup\vspace@\Let@
 \baselineskip\fontdimen10 \scriptfont\tw@
 \advance\baselineskip\fontdimen12 \scriptfont\tw@
 \lineskip\thr@@\fontdimen8 \scriptfont\thr@@
 \lineskiplimit\lineskip
 \vbox\bgroup\ialign\bgroup\hfil$\m@th\scriptstyle{##}$\hfil\crcr}%
\def\Sb{_\multilimits@}%
\def\endSb{\crcr\egroup\egroup\egroup}%
\def\Sp{^\multilimits@}%
\newdimen\ex@
\def\rightarrowfill@#1{$#1\m@th\mathord-\mkern-6mu\cleaders
 \hbox{$#1\mkern-2mu\mathord-\mkern-2mu$}\hfill
 \mkern-6mu\mathord\rightarrow$}%
\def\leftarrowfill@#1{$#1\m@th\mathord\leftarrow\mkern-6mu\cleaders
 \hbox{$#1\mkern-2mu\mathord-\mkern-2mu$}\hfill\mkern-6mu\mathord-$}%
\def\leftrightarrowfill@#1{$#1\m@th\mathord\leftarrow
\mkern-6mu\cleaders
 \hbox{$#1\mkern-2mu\mathord-\mkern-2mu$}\hfill
 \mkern-6mu\mathord\rightarrow$}%
\def\overrightarrow{\mathpalette\overrightarrow@}%
\def\overrightarrow@#1#2{\vbox{\ialign{##\crcr\rightarrowfill@#1\crcr
 \noalign{\kern-\ex@\nointerlineskip}$\m@th\hfil#1#2\hfil$\crcr}}}%
\def\overleftarrow{\mathpalette\overleftarrow@}%
\def\overleftarrow@#1#2{\vbox{\ialign{##\crcr\leftarrowfill@#1\crcr
 \noalign{\kern-\ex@\nointerlineskip}$\m@th\hfil#1#2\hfil$\crcr}}}%
\def\overleftrightarrow{\mathpalette\overleftrightarrow@}%
\def\overleftrightarrow@#1#2{\vbox{\ialign{##\crcr
   \leftrightarrowfill@#1\crcr
 \noalign{\kern-\ex@\nointerlineskip}$\m@th\hfil#1#2\hfil$\crcr}}}%
\def\underrightarrow{\mathpalette\underrightarrow@}%
\def\underrightarrow@#1#2{\vtop{\ialign{##\crcr$\m@th\hfil#1#2\hfil
  $\crcr\noalign{\nointerlineskip}\rightarrowfill@#1\crcr}}}%
\def\underleftarrow{\mathpalette\underleftarrow@}%
\def\underleftarrow@#1#2{\vtop{\ialign{##\crcr$\m@th\hfil#1#2\hfil
  $\crcr\noalign{\nointerlineskip}\leftarrowfill@#1\crcr}}}%
\def\underleftrightarrow{\mathpalette\underleftrightarrow@}%
\def\underleftrightarrow@#1#2{\vtop{\ialign{##\crcr$\m@th
  \hfil#1#2\hfil$\crcr
 \noalign{\nointerlineskip}\leftrightarrowfill@#1\crcr}}}%
\def\qopnamewl@#1{\mathop{\operator@font#1}\nlimits@}
\let\nlimits@\displaylimits
\def\setboxz@h{\setbox\z@\hbox}
\def\varlim@#1#2{\mathop{\vtop{\ialign{##\crcr
 \hfil$#1\m@th\operator@font lim$\hfil\crcr
 \noalign{\nointerlineskip}#2#1\crcr
 \noalign{\nointerlineskip\kern-\ex@}\crcr}}}}
 \def\rightarrowfill@#1{\m@th\setboxz@h{$#1-$}\ht\z@\z@
  $#1\copy\z@\mkern-6mu\cleaders
  \hbox{$#1\mkern-2mu\box\z@\mkern-2mu$}\hfill
  \mkern-6mu\mathord\rightarrow$}
\def\leftarrowfill@#1{\m@th\setboxz@h{$#1-$}\ht\z@\z@
  $#1\mathord\leftarrow\mkern-6mu\cleaders
  \hbox{$#1\mkern-2mu\copy\z@\mkern-2mu$}\hfill
  \mkern-6mu\box\z@$}
\def\projlim{\qopnamewl@{proj\,lim}}
\def\injlim{\qopnamewl@{inj\,lim}}
\def\varinjlim{\mathpalette\varlim@\rightarrowfill@}
\def\varprojlim{\mathpalette\varlim@\leftarrowfill@}
\def\varliminf{\mathpalette\varliminf@{}}
\def\varliminf@#1{\mathop{\underline{\vrule\@depth.2\ex@\@width\z@
   \hbox{$#1\m@th\operator@font lim$}}}}
\def\varlimsup{\mathpalette\varlimsup@{}}
\def\varlimsup@#1{\mathop{\overline
  {\hbox{$#1\m@th\operator@font lim$}}}}
\def\align{\@verbatim \frenchspacing\@vobeyspaces \@alignverbatim
You are using the "align" environment in a style in which it is not defined.}
\let\csname endalign*\endcsname =\endtrivlist
\def\alignat{\@verbatim \frenchspacing\@vobeyspaces \@alignatverbatim
You are using the "alignat" environment in a style in which it is not defined.}
\let\csname endalignat*\endcsname =\endtrivlist
\def\xalignat{\@verbatim \frenchspacing\@vobeyspaces \@xalignatverbatim
You are using the "xalignat" environment in a style in which it is not defined.}
\let\csname endxalignat*\endcsname =\endtrivlist
\def\gather{\@verbatim \frenchspacing\@vobeyspaces \@gatherverbatim
You are using the "gather" environment in a style in which it is not defined.}
\let\csname endgather*\endcsname =\endtrivlist
\def\multiline{\@verbatim \frenchspacing\@vobeyspaces \@multilineverbatim
You are using the "multiline" environment in a style in which it is not defined.}
\let\csname endmultiline*\endcsname =\endtrivlist
\def\arrax{\@verbatim \frenchspacing\@vobeyspaces \@arraxverbatim
You are using a type of "array" construct that is only allowed in AmS-LaTeX.}
\def\tabulax{\@verbatim \frenchspacing\@vobeyspaces \@tabulaxverbatim
You are using a type of "tabular" construct that is only allowed in AmS-LaTeX.}
\let\csname endarrax*\endcsname =\endtrivlist
\let\csname endtabulax*\endcsname =\endtrivlist
 \def\endequation{%
     \ifmmode\ifinner 
      \iftag@
        \addtocounter{equation}{-1} 
        $\hfil
           \displaywidth\linewidth\@taggnum\egroup \endtrivlist
        \global\tag@false
        \global\@ignoretrue   
      \else
        $\hfil
           \displaywidth\linewidth\@eqnnum\egroup \endtrivlist
        \global\tag@false
        \global\@ignoretrue 
      \fi
     \else   
      \iftag@
        \addtocounter{equation}{-1} 
        \eqno \hbox{\@taggnum}
        \global\tag@false%
        $$\global\@ignoretrue
      \else
        \eqno \hbox{\@eqnnum}
        $$\global\@ignoretrue
      \fi
     \fi\fi
 } 
 \newif\iftag@ \tag@false
 \def\TCItag{\@ifnextchar*{\@TCItagstar}{\@TCItag}}
 \def\@TCItag#1{%
     \global\tag@true
     \global\def\@taggnum{(#1)}%
     \global\def\@currentlabel{#1}}
 \def\@TCItagstar*#1{%
     \global\tag@true
     \global\def\@taggnum{#1}%
     \global\def\@currentlabel{#1}}
     \def\tag{\@ifnextchar*{\@tagstar}{\@tag}}
     \def\@tag#1{%
         \global\tag@true
         \global\def\@taggnum{(#1)}}
     \def\@tagstar*#1{%
         \global\tag@true
         \global\def\@taggnum{#1}}
\def\dfrac#1#2{{\displaystyle {#1 \over #2}}}%
\begin{document}

\title{Isoperimetric inequalities for Bergman analytic content}
\date{}
\author{Stephen J. Gardiner, Marius Ghergu\ and Tomas Sj\"{o}din}
\maketitle

\begin{abstract}
The Bergman $p$-analytic content ($1\leq p<\infty $) of a planar domain $%
\Omega $ measures the $L^{p}(\Omega )$-distance between $\overline{z}$ and
the Bergman space $A^{p}(\Omega )$ of holomorphic functions. It has a
natural analogue in all dimensions which is formulated in terms of harmonic
vector fields. This paper investigates isoperimetric inequalities for
Bergman $p$-analytic content in terms of the St Venant functional for
torsional rigidity, and addresses the cases of equality with the upper and
lower bounds.
\end{abstract}

\section{Introduction}

\footnotetext{%
\noindent 2010 \textit{Mathematics Subject Classification } 31B05.
\par
\noindent \textit{Keywords: harmonic vector field, Bergman space,
isoperimetric inequality, torsional rigidity}}The \textit{Bergman }$p$%
\textit{-analytic content} ($1\leq p<\infty $) of a bounded planar domain $%
\Omega $ was introduced by Guadarrama and Khavinson \cite{GuKh}. It is
defined by the formula $\lambda _{A^{p}}(\Omega )=\inf_{f\in A^{p}(\Omega
)}\left\Vert \overline{z}-f\right\Vert _{p}$, where $\left\Vert \cdot
\right\Vert _{p}$ is the usual $L^{p}(\Omega )$-norm and $A^{p}(\Omega )$ is
the Bergman space of $L^{p}(\Omega )$-integrable holomorphic functions $f$
on $\Omega $. In the case where $p=2$, Fleeman and Khavinson \cite{FK}
showed that, for any simply connected domain $\Omega $ with piecewise smooth
boundary, 
\begin{equation*}
\sqrt{\rho (\Omega )}\leq \lambda _{A^{2}}(\Omega )\leq \frac{m(\Omega )}{%
\sqrt{2\pi }},
\end{equation*}%
where $\rho (\Omega )$ denotes the torsional rigidity of $\Omega $ and $m$
is Lebesgue measure. Subsequently, Fleeman and Lundberg \cite{FL} showed
that the left hand inequality is actually an equality for any bounded simply
connected domain, and this relationship has been further exploited by
Fleeman and Simanek \cite{FS}. Bell, Ferguson and Lundberg \cite{BFL}
established related inequalities concerning torsional rigidity and the norm
of the self-commutator of a Toeplitz operator. The limiting case of Bergman $%
p$-analytic content where $p=\infty $ is the notion of analytic content,
which has been studied for many years: see, for example, \cite{GaKh}, \cite%
{BK}, \cite{ABKT} for the case of the plane, and \cite{GK}, \cite{GGS} for
its extension to higher dimensions.

Rewriting $\lambda _{A^{p}}(\Omega )$ as $\inf_{\phi \in A^{p}(\Omega
)}\left\Vert z-\overline{\phi }\right\Vert _{p}$, we see that a natural
generalization to bounded domains $\Omega $ in Euclidean space $\mathbb{R}%
^{N}$ ($N\geq 2$) is given by%
\begin{equation*}
\lambda _{A_{p}}(\Omega )=\inf \{\left\Vert x-f\right\Vert _{\mathcal{L}%
_{p}}:f\in A_{p}(\Omega )\}\text{ \ \ \ }(1\leq p<\infty ),
\end{equation*}%
where $A_{p}(\Omega )$ denotes the space of \textit{harmonic vector fields }$%
f=(f_{1},...,f_{N})$ in $\mathcal{L}_{p}\cap C^{1}(\Omega )$,\textit{\ }%
\begin{equation*}
\mathcal{L}_{p}=\mathcal{L}_{p}(\Omega )=\left( L^{p}(\Omega )\right) ^{N},%
\text{\ \ \ }\Vert f\Vert _{\mathcal{L}_{p}}=\left( \int_{\Omega }\left\Vert
f\right\Vert ^{p}dm\right) ^{1/p}
\end{equation*}%
and\ $\left\Vert \cdot \right\Vert $ is the usual Euclidean norm on $\mathbb{%
R}^{N}$. Thus $f$ satisfies $\func{div}f=0$ and $\func{curl}f=0$, where the
latter condition means that 
\begin{equation*}
\frac{\partial f_{j}}{\partial x_{k}}-\frac{\partial f_{k}}{\partial x_{j}}=0%
\text{ \ \ for all }j,k\in \{1,...,N\}\text{\ on\ }\Omega .
\end{equation*}%
The gradient of any harmonic function is a harmonic vector field, and the
converse assertion is also true when $\Omega $ is simply connected. We will
assume from now on that $\Omega $ is smoothly bounded.

The purpose of this paper is to investigate isoperimetric inequalities for $%
\lambda _{A_{p}}(\Omega )$ $(1\leq p<\infty )$ in all dimensions, and to
examine the cases of equality with the upper and lower bounds (cf. Problem
3.4 of \cite{BK2}). We denote by $q$ the dual exponent of $p$, whence $%
1/p+1/q=1$ (or $q=\infty $ if $p=1$), and note that the dual space $\mathcal{%
L}_{p}^{\ast }$ can be identified with $\mathcal{L}_{q}$.\ When $q<\infty $
we denote by $W_{0}^{1,q}(\Omega )$ the closure of $C_{c}^{\infty }(\Omega )$
in the\ Sobolev space $W^{1,q}(\Omega )$; these are the functions in $%
W^{1,q}(\Omega )$ that have trace zero on $\partial \Omega $ (see Section
5.5 of \cite{Ev}). Since any function in $W^{1,\infty }(\Omega )$ has a
Lipschitz representative, it is natural to denote by $W_{0}^{1,\infty
}(\Omega )$ the subset of $W^{1,\infty }(\Omega )$ comprising those
functions which vanish on $\partial \Omega $. We define%
\begin{equation}
Q_{q}(\Omega )=\sup_{u\in W_{0}^{1,q}(\Omega )\backslash \{0\}}\frac{N}{%
\Vert \nabla u\Vert _{\mathcal{L}_{q}}}\int_{\Omega }u~dm\text{ \ \ \ }%
(1<q\leq \infty ).  \label{Qdef}
\end{equation}%
When $q<\infty $, the quantity $\left( Q_{q}(\Omega )\right) ^{q}$ is known
as the \textit{St Venant }$q$\textit{-functional} of $\Omega $. Its
relationship with the torsional rigidity $\rho (\Omega )$ will be discussed
in Section \ref{S4}.

We begin with the case $p=2$, where we can add the following to the results
of \cite{FK} and \cite{FL}.

\begin{theorem}
\label{Cor}If $\Omega \subset \mathbb{R}^{N}$ is a smoothly bounded domain,
then $\lambda _{A_{2}}(\Omega )=Q_{2}(\Omega )$.\ Further, $\lambda
_{A_{2}}(\Omega )=\sqrt{\rho (\Omega )}$ if and only if $\mathbb{R}%
^{N}\backslash \Omega $ is connected.
\end{theorem}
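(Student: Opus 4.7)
My plan is to split the proof into two parts, corresponding to the two equalities.

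For the first equality $\lambda_{A_{2}}(\Omega)=Q_{2}(\Omega)$, the natural approach is Hilbert-space duality. The key observation is the integration-by-parts identity
$$\int_{\Omega}\langle f,\nabla u\rangle\,dm=0\quad\text{for all }f\in A_{2}(\Omega),\ u\in W_{0}^{1,2}(\Omega),$$
which uses $\operatorname{div}f=0$ and the vanishing trace of $u$. Since $\operatorname{div}x=N$, a companion integration by parts gives $\int_{\Omega}\langle x,\nabla u\rangle\,dm=-N\int_{\Omega}u\,dm$. Subtracting and applying Cauchy--Schwarz produces $N|\int_{\Omega}u\,dm|\leq\|x-f\|_{\mathcal{L}_{2}}\|\nabla u\|_{\mathcal{L}_{2}}$; taking the supremum over $u\in W_{0}^{1,2}(\Omega)\setminus\{0\}$ and the infimum over $f\in A_{2}(\Omega)$ yields $Q_{2}(\Omega)\leq\lambda_{A_{2}}(\Omega)$.

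For the reverse bound, I would exhibit a matched pair. Let $u^{*}$ be the harmonic extension of $\tfrac{1}{2}|x|^{2}$ to $\Omega$; then $\nabla u^{*}\in A_{2}(\Omega)$ and, setting $w:=\tfrac{1}{2}|x|^{2}-u^{*}$, one has $w\in W_{0}^{1,2}(\Omega)$, $\Delta w=N$ in $\Omega$, and $x-\nabla u^{*}=\nabla w$. A further integration by parts gives $\|\nabla w\|_{\mathcal{L}_{2}}^{2}=-N\int_{\Omega}w\,dm$, and the maximum principle applied to $-w$ shows $w\leq 0$ on $\Omega$. Feeding $-w$ as the test function in $Q_{2}$ yields
$$Q_{2}(\Omega)\geq\frac{N\int_{\Omega}(-w)\,dm}{\|\nabla w\|_{\mathcal{L}_{2}}}=\|\nabla w\|_{\mathcal{L}_{2}}=\|x-\nabla u^{*}\|_{\mathcal{L}_{2}}\geq\lambda_{A_{2}}(\Omega),$$
which combines with the previous step to give $\lambda_{A_{2}}(\Omega)=Q_{2}(\Omega)$, and identifies $\nabla u^{*}$ as a minimizer and $w$ (up to sign) as an extremizer of $Q_{2}$.

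For the second equality, I would exploit the characterization of the extremizers obtained above together with a domain-monotonicity argument. By Part 1 the computation $\lambda_{A_{2}}(\Omega)^{2}=\|\nabla w\|_{\mathcal{L}_{2}}^{2}$ identifies $\lambda_{A_{2}}(\Omega)^{2}$ with a Dirichlet-type torsion energy on $\Omega$. The St Venant/torsional-rigidity quantity $\rho(\Omega)$ corresponds to the analogous energy on the ``filled-in'' hull $\widetilde{\Omega}:=\Omega\cup(\text{bounded components of }\mathbb{R}^{N}\setminus\Omega)$; since $\Omega\subseteq\widetilde{\Omega}$, monotonicity of torsional rigidity under domain inclusion yields $\lambda_{A_{2}}(\Omega)\leq\sqrt{\rho(\Omega)}$. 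Equality holds exactly when $\widetilde{\Omega}=\Omega$, equivalently when $\mathbb{R}^{N}\setminus\Omega$ is connected: the forward direction uses that if $\widetilde{\Omega}=\Omega$ the two torsion problems coincide, while the converse uses strict monotonicity of torsional rigidity (filling in even a single bounded complementary component strictly increases the torsion energy of the extremal function).

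The main obstacle will be the converse direction in Part 2, which requires showing that the presence of a bounded complementary component strictly enlarges the relevant torsion energy; this hinges on Hopf-lemma/strict-maximum-principle arguments applied to the extension of $w$ across the added component, together with the precise relation between $\rho$ and $Q_{2}^{2}=\rho_{\mathrm{class}}(\Omega)$ that is the subject of Section~\ref{S4}.
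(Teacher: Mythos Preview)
Your Part 1 is correct and essentially mirrors the paper's argument: the paper, too, uses the harmonic extension $H^{\Omega}_{\|x\|^{2}/2}$ and the identity $\|\nabla u\|_{\mathcal L_2}^{2}=N\int_{\Omega}u\,dm$ (with $u=H^{\Omega}_{\|x\|^{2}/2}-\|x\|^{2}/2$) to obtain $\lambda_{B_2}(\Omega)\le Q_2(\Omega)$, and combines this with the general duality inequality $Q_q(\Omega)=\lambda_{D_p}(\Omega)\le\lambda_{A_p}(\Omega)\le\lambda_{B_p}(\Omega)$.

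Your Part 2, however, rests on a misidentification of $\rho(\Omega)$. In the paper, $\rho(\Omega)=\int_{\Omega}\|\nabla v\|^{2}\,dm$ where $-\Delta v=N$ in $\Omega$, $v=0$ on the outer boundary $\Gamma_{0}$, and $v=c_{i}$ on each inner boundary $\Gamma_{i}$, the constants $c_{i}$ being fixed by the flux conditions $\int_{\Gamma_{i}}\partial v/\partial n\,d\sigma=2m(G_{i})$. This is \emph{not} the torsion of the filled-in hull $\widetilde{\Omega}$; for instance, on an annulus $B(R)\setminus\overline{B(r)}$ in the plane one finds $\rho(\Omega)=\tfrac{\pi}{2}(R^{4}-r^{4})$, while the torsion of $\widetilde{\Omega}=B(R)$ is $\tfrac{\pi}{2}R^{4}$. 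Hence your domain-monotonicity argument, which compares $Q_2(\Omega)^{2}$ with the torsion of $\widetilde{\Omega}$, does not address the quantity $\rho(\Omega)$ actually appearing in the theorem.

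The paper's route is quite different. Writing $h=v-Nw_{2}$ (harmonic, with $h|_{\Gamma_0}=0$, $h|_{\Gamma_i}=c_i$), one has the orthogonal splitting $\rho(\Omega)=\|\nabla(Nw_2)\|_{\mathcal L_2}^{2}+\|\nabla h\|_{\mathcal L_2}^{2}=Q_2(\Omega)^{2}+\|\nabla h\|_{\mathcal L_2}^{2}$, so equality $\rho(\Omega)=Q_2(\Omega)^{2}$ holds precisely when all $c_i=0$. The paper then rules out $c_i\le0$ by the Hopf boundary point lemma: if $c_k=\min_i c_i\le 0$, then $v$ attains its minimum on $\Gamma_k$ and Hopf gives $\partial v/\partial n<0$ there, contradicting the flux condition. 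Thus the $c_i$ are strictly positive whenever inner components exist, $v$ is not a multiple of $w_2$, and $\rho(\Omega)\neq Q_2(\Omega)^{2}$. The key missing ingredient in your proposal is this Hopf-lemma step tied to the specific flux-normalised boundary constants; the hull-monotonicity idea is not what is needed here.
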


Next, we establish a lower bound for $\lambda _{A_{p}}(\Omega )$ for all $p$.

\begin{theorem}
\label{main1}If $\Omega \subset \mathbb{R}^{N}$ is a smoothly bounded domain
and $p\in \lbrack 1,\infty )$, then 
\begin{equation}
Q_{q}(\Omega )\leq \lambda _{A_{p}}(\Omega ).  \label{LAP}
\end{equation}%
Further, equality holds if and only if either \newline
(a) $p=2$, or \newline
(b) $\Omega $ is a ball or an annular region.
\end{theorem}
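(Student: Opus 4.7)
The approach is duality. Since $A_p(\Omega)$ is a closed subspace of $\mathcal{L}_p$ and $\mathcal{L}_p^{\ast}\cong\mathcal{L}_q$, the Hahn--Banach distance formula gives
\[
\lambda_{A_p}(\Omega)=\sup\Bigl\{\Bigl|\int_\Omega x\cdot g\,dm\Bigr|:g\in\mathcal{L}_q,\ \|g\|_{\mathcal{L}_q}\le 1,\ \int_\Omega g\cdot f\,dm=0\ \forall f\in A_p(\Omega)\Bigr\}.
\]
For any $u\in W_0^{1,q}(\Omega)\setminus\{0\}$, the gradient $g=\nabla u$ belongs to this annihilator: integration by parts, using $u|_{\partial\Omega}=0$ and $\operatorname{div}f=0$, gives $\int_\Omega\nabla u\cdot f\,dm=0$ for every $f\in A_p(\Omega)$. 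Pairing with the identity field and using $\operatorname{div}(x)=N$ produces $\int_\Omega x\cdot\nabla u\,dm=-N\int_\Omega u\,dm$. Normalising, taking the supremum over $u$, and changing the sign of $u$ if necessary yields $\lambda_{A_p}(\Omega)\ge Q_q(\Omega)$, which is (\ref{LAP}).

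Case (a) of the equality statement is precisely Theorem \ref{Cor}. For case (b), I would exhibit extremals explicitly. The Euler--Lagrange equation for the supremum defining $Q_q(\Omega)$ is the $q$-Laplace equation $-\operatorname{div}(|\nabla u|^{q-2}\nabla u)=\kappa$ in $\Omega$ with $u|_{\partial\Omega}=0$; on a ball or an annulus its (essentially unique) maximizer $u$ is radial by symmetry, so $|\nabla u|^{q-2}\nabla u$ is a radial vector field. A direct computation on radial fields then produces $f^*\in A_p(\Omega)$---namely $f^*\equiv 0$ on a ball, and $f^*(x)=\alpha\,x/|x|^N$ on an annulus for a suitable constant $\alpha$---satisfying
\[
x-f^*(x)=c\,|\nabla u(x)|^{q-2}\nabla u(x)\quad\text{pointwise on }\Omega
\]
for some constant $c$. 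Since $(q-1)p=q$, this proportionality yields $\|x-f^*\|_{\mathcal{L}_p}=c\|\nabla u\|_{\mathcal{L}_q}^{q-1}$ and $\int_\Omega x\cdot\nabla u\,dm=c\|\nabla u\|_{\mathcal{L}_q}^q$, so
\[
\|x-f^*\|_{\mathcal{L}_p}=\frac{N\bigl|\int_\Omega u\,dm\bigr|}{\|\nabla u\|_{\mathcal{L}_q}}\le Q_q(\Omega).
\]
Combined with (\ref{LAP}) and the trivial bound $\lambda_{A_p}(\Omega)\le\|x-f^*\|_{\mathcal{L}_p}$, equality is forced throughout.

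The converse---equality in (\ref{LAP}) together with $p\neq 2$ forces $\Omega$ to be a ball or annular region---is the heart of the theorem and where I expect the principal difficulty. Reversing the above computation, equality must produce an extremal $u$ for $Q_q(\Omega)$ together with a best approximant $f^*\in A_p(\Omega)$ satisfying $x-f^*=c\,|\nabla u|^{q-2}\nabla u$ pointwise on $\Omega$ (this is exactly the pointwise equality case of H\"older's inequality in the dual pairing). Since $u=0$ on $\partial\Omega$, the gradient $\nabla u$ is parallel to the outward unit normal on $\partial\Omega$, so $x-f^*$ is also normal to $\partial\Omega$; equivalently, the tangential component of the harmonic vector field $f^*$ coincides with that of the identity field $x$ on $\partial\Omega$. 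Coupled with the nonlinear PDE $-\operatorname{div}(|\nabla u|^{q-2}\nabla u)=\kappa$ in $\Omega$, $u|_{\partial\Omega}=0$, this is an overdetermined boundary value problem of Serrin type. I would invoke (or adapt) a rigidity theorem for overdetermined $q$-Laplace problems---either by moving planes in the nonlinear setting or by a Pohozaev--Weinberger-type integral identity---to conclude that $\Omega$ is radially symmetric about some point, hence a ball or an annular region. The exception at $p=2$ is natural: there $|\nabla u|^{q-2}\nabla u=\nabla u$ is degenerate and the pointwise proportionality imposes no extra constraint beyond what the Hodge decomposition already supplies.
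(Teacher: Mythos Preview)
Your proof of the inequality \eqref{LAP} and of sufficiency in the equality statement is correct and essentially equivalent to the paper's: the paper routes the inequality through the auxiliary constant $\lambda_{D_p}(\Omega)$ and Proposition~\ref{LD}, but this amounts to the same duality and integration-by-parts computation you describe, and your explicit extremals on balls and annuli match those implicit in Lemma~\ref{ann}.

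The converse for $p\neq2$, however, has a genuine gap. You correctly isolate the pointwise relation $x-f^\ast=c\,|\nabla u|^{q-2}\nabla u$, but the boundary observation ``$x-f^\ast$ is normal on $\partial\Omega$'' involves the unknown approximant $f^\ast$ and is not by itself a Serrin-type condition on $u$; you never explain how to extract from the system a constraint such as $|\nabla u|=\mathrm{const}$ on each boundary component, nor where exactly $p\neq2$ enters. The paper proceeds differently at this point. A separate step (Proposition~\ref{conserv}) shows that the extremal $f^\ast\in A_p(\Omega)$ is actually a gradient $\nabla h$ even when $\Omega$ is not simply connected; this requires an extension and mollification argument to verify global path-independence. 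One then passes to $v=\Vert x\Vert^2/2-h$, which satisfies the \emph{linear} equation $\Delta v=N$, with $\nabla v$ parallel to $\nabla w_q$ and $\Vert\nabla w_q\Vert^q=c\Vert\nabla v\Vert^p$. The hypothesis $p\neq2$ (hence $p\neq q$) is used algebraically: writing $v=g_i\circ w_q$ near each $\Gamma_i$ and eliminating $\Vert\nabla w_q\Vert$ from the two relations gives $\Vert\nabla v\Vert=c^{1/(q-p)}|g_i'(w_q)|^{q/(q-p)}$, forcing $\Vert\nabla v\Vert$ constant on each $\Gamma_i$. This is now a classical overdetermined problem for the ordinary Laplacian on a multiply connected domain, and the paper invokes Sirakov's extension of Serrin's theorem (Theorem~\ref{KS}). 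Your proposed route through $q$-Laplace rigidity might be made to work---the curl-free condition on $f^\ast$ does, when $q\neq2$, force $|\nabla u|$ to be constant on level sets of $u$---but you would then need a Serrin-type theorem for $-\Delta_q u=1$ on domains with several boundary components, and you have not identified one.
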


The case of equality above when $p\neq 2$ is a counterpart of a recent
result of Abanov, B\'{e}n\'{e}teau, Khavinson and Teodorescu \cite{ABKT}
concerning analytic content in the plane (that is, where $p=\infty $ and $%
N=2 $).

It remains to establish an upper bound for $\lambda _{A_{p}}(\Omega )$. Let $%
B(r)$ denote the open ball in $\mathbb{R}^{N}$\ of centre $0$ and radius $r$%
, and let $B=B(1)$. Further, let $r_{\Omega }>0$ be chosen so that $%
m(B(r_{\Omega }))=m(\Omega )$. Then, by the generalized Faber-Krahn
inequality (cf. \cite{BdPV}), we have $Q_{q}(\Omega )\leq Q_{q}(B(r_{\Omega
}))$. The result below is new in all dimensions.

\begin{theorem}
\label{main2}If $\Omega \subset \mathbb{R}^{N}$ is a smoothly bounded domain
and $p\in \lbrack 1,2]$, then 
\begin{equation}
\lambda _{A_{p}}(\Omega )\leq Q_{q}(B(r_{\Omega })).  \label{LAP1}
\end{equation}%
Further, equality holds if and only if $\Omega $ is a ball.
\end{theorem}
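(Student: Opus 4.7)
The plan is to bound $\lambda_{A_p}(\Omega)$ from above by exhibiting an explicit admissible harmonic vector field built from the torsion function of $\Omega$, thereby reducing the theorem to an $L^p$-gradient Saint-Venant comparison. Let $w$ denote the torsion function of $\Omega$ (so $-\Delta w = 1$ in $\Omega$ and $w = 0$ on $\partial\Omega$) and let $w_B$ denote the corresponding function on $B := B(r_\Omega)$, where $w_B(x) = (r_\Omega^2 - \|x\|^2)/(2N)$ and $\|\nabla w_B(x)\| = \|x\|/N$.

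For any $v \in C^2(\overline{\Omega})$ with $\Delta v = N$, the field $f := x - \nabla v$ satisfies $\operatorname{div} f = 0$ and $\operatorname{curl} f = 0$, so $f \in A_p(\Omega)$ and $\lambda_{A_p}(\Omega) \leq \|\nabla v\|_{\mathcal{L}_p(\Omega)}$. Applying this with $v = -Nw$ yields $\lambda_{A_p}(\Omega) \leq N\|\nabla w\|_{\mathcal{L}_p(\Omega)}$. On the ball, the same construction gives $N\|\nabla w_B\|_{\mathcal{L}_p(B)} = \|x\|_{\mathcal{L}_p(B)}$, and testing \eqref{Qdef} with $u(x) = r_\Omega^p - \|x\|^p$ (noting that $q(p-1)=p$) shows that this quantity equals $Q_q(B(r_\Omega))$. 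The theorem thus reduces to the $L^p$-gradient isoperimetric inequality
\[
\|\nabla w\|_{\mathcal{L}_p(\Omega)} \leq \|\nabla w_B\|_{\mathcal{L}_p(B)} \qquad (p \in [1,2]),
\]
with equality iff $\Omega$ is a ball.

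The proof of this inequality is the main obstacle. Writing $\mu(t) := m(\{w>t\})$ and $L(t) := \mathcal{H}^{N-1}(\{w=t\})$, the divergence theorem applied to $-\Delta w = 1$ gives $\int_{\{w=t\}} \|\nabla w\|\, d\sigma = \mu(t)$, while the coarea formula gives $\int_{\{w=t\}} \|\nabla w\|^{-1}\, d\sigma = -\mu'(t)$. For $p \in [1,2]$ the function $r \mapsto r^{p-1}$ is concave on $[0,\infty)$, so Jensen's inequality on $\{w=t\}$ yields $\int_{\{w=t\}} \|\nabla w\|^{p-1}\, d\sigma \leq L(t)^{2-p}\mu(t)^{p-1}$; Cauchy-Schwarz gives $L(t)^2 \leq \mu(t)(-\mu'(t))$; and the classical isoperimetric inequality $L(t) \geq N\omega_N^{1/N}\mu(t)^{(N-1)/N}$ combines with Cauchy-Schwarz to produce Talenti's differential inequality $-\mu'(t) \geq N^2\omega_N^{2/N}\mu(t)^{(N-2)/N}$. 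Substituting these into the coarea representation
\[
\|\nabla w\|_{\mathcal{L}_p(\Omega)}^p = \int_0^{\max w}\!\int_{\{w=t\}}\|\nabla w\|^{p-1}\, d\sigma\, dt
\]
and passing to the variable $s = \mu(t)$, the algebra collapses to $N^{-p}\omega_N^{-p/N}\int_0^{m(\Omega)} s^{p/N}\, ds$, which is precisely $\|\nabla w_B\|_{\mathcal{L}_p(B)}^p$. The restriction $p \in [1,2]$ is essential: it is exactly what makes Jensen push in the same direction as the other bounds, so that everything telescopes. For the equality case, equality in Jensen and Cauchy-Schwarz forces $\|\nabla w\|$ to be constant on each level set of $w$, and equality in the isoperimetric inequality forces those level sets to be balls; together these make $w$ radial, so $\Omega$ is a ball.
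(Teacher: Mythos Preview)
Your argument is correct and is essentially the paper's proof: both use the torsion function to produce an admissible competitor $x-\nabla v$ with $\Delta v=N$, reduce the inequality to Talenti's $L^{p}$-gradient comparison $\|\nabla w\|_{\mathcal{L}_{p}(\Omega)}\le\|\nabla w_{B}\|_{\mathcal{L}_{p}(B)}$ for $p\le 2$, and identify the right-hand side with $Q_{q}(B(r_{\Omega}))$ via Proposition~\ref{Qq}(iv). The only difference is packaging: the paper invokes Talenti~\cite{Tal} for the comparison and Kesavan~\cite{Ke} for the equality case, while you have written out Talenti's coarea/Jensen/Cauchy--Schwarz/isoperimetric argument (the concavity of $r\mapsto r^{p-1}$ being precisely where $p\le 2$ enters) and the rigidity analysis by hand.
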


We will see later, in Proposition \ref{Qq}, that the upper bound in (\ref%
{LAP1}) is given explicitly by\textbf{\ }%
\begin{equation*}
Q_{q}(B(r))=\left( \dfrac{N}{N+p}m(B)\right) ^{1/p}r^{1+N/p}\ \ \ (1\leq
p<\infty ).
\end{equation*}%
Recent work of the authors \cite{GGS} shows that there is a harmonic
function $h$ on $\Omega $ satisfying $\sup_{\Omega }\left\Vert x-\nabla
h\right\Vert \leq r_{\Omega }$, whence $\lambda _{A_{p}}(\Omega )\leq
(m(B))^{1/p}r_{\Omega }^{1+N/p}$ for general $p$. We conjecture that balls
are always the extremal domains for (\ref{LAP1}); that is, the sharper
estimate of Theorem \ref{main2}, 
\begin{equation*}
\lambda _{A_{p}}(\Omega )\leq \left( \frac{N}{N+p}m(B)\right)
^{1/p}r_{\Omega }^{1+N/p},
\end{equation*}%
remains valid for all $p\in \lbrack 1,\infty )$.

Theorems \ref{main1} and \ref{main2} together yield the following
isoperimetric inequality for Bergman $p$-analytic content.

\begin{corollary}
If $\Omega \subset \mathbb{R}^{N}$ is a smoothly bounded domain and $p\in
\lbrack 1,2]$, then%
\begin{equation*}
Q_{q}(\Omega )\leq \lambda _{A_{p}}(\Omega )\leq Q_{q}(B(r_{\Omega })).
\end{equation*}
\end{corollary}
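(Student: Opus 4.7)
The plan is essentially immediate: the Corollary is a direct consequence of Theorems \ref{main1} and \ref{main2} stated immediately above, so the only task is to chain the two inequalities together.

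First I would invoke Theorem \ref{main1}, which asserts that $Q_q(\Omega)\leq \lambda_{A_p}(\Omega)$ for every smoothly bounded domain $\Omega\subset\mathbb{R}^N$ and every $p\in[1,\infty)$. Specializing to $p\in[1,2]$ gives the left-hand inequality of the Corollary with no further work.

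Next I would apply Theorem \ref{main2}, which asserts that $\lambda_{A_p}(\Omega)\leq Q_q(B(r_\Omega))$ under the stronger hypothesis $p\in[1,2]$. This supplies the right-hand inequality, and concatenating the two bounds yields the Corollary.

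Since no extra argument is required, there is no real obstacle here: the substantive content lives in Theorems \ref{main1} and \ref{main2}, while the Corollary merely records their combined statement. It is worth noting, however, that this combined statement strictly refines the Faber--Krahn-type inequality $Q_q(\Omega)\leq Q_q(B(r_\Omega))$ recalled just before Theorem \ref{main2}: the quantity $\lambda_{A_p}(\Omega)$ is inserted between the two terms, which is precisely the isoperimetric inequality for Bergman $p$-analytic content advertised by the title of the paper.
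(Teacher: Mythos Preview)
Your proposal is correct and matches the paper's approach exactly: the paper simply states that the Corollary follows from Theorems \ref{main1} and \ref{main2} and gives no separate proof. Your only addition is the observation about refining the Faber--Krahn inequality, which is a fair gloss but not part of the argument.
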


The remainder of the paper is devoted to proving the above results.

\section{Existence and uniqueness of extremal functions\label{S3}}

In the course of proving our results concerning $\lambda _{A_{p}}(\Omega )$
and $Q_{q}(\Omega )$, we are led to consider the related domain constants%
\begin{eqnarray*}
\lambda _{B_{p}}(\Omega ) &=&\inf \{\left\Vert x-f\right\Vert _{\mathcal{L}%
_{p}}:f\in B_{p}(\Omega )\}\text{,} \\
\lambda _{D_{p}}(\Omega ) &=&\inf \{\left\Vert x-f\right\Vert _{\mathcal{L}%
_{p}}:f\in D_{p}(\Omega )\},
\end{eqnarray*}%
where 
\begin{eqnarray*}
B_{p}(\Omega ) &=&\{\nabla h:h\in W^{1,p}(\Omega )\cap C^{2}(\Omega )\text{
and }\Delta h=0\text{ on }\Omega \}, \\
D_{p}(\Omega ) &=&\{f\in \mathcal{L}_{p}:\func{div}f=0\text{ on }\Omega 
\text{ in the sense of distributions}\}.
\end{eqnarray*}%
Since $B_{p}(\Omega )\subset A_{p}(\Omega )\subset D_{p}(\Omega ),$ we see
that 
\begin{equation}
\lambda _{D_{p}}(\Omega )\leq \lambda _{A_{p}}(\Omega )\leq \lambda
_{B_{p}}(\Omega ).  \label{Lineq}
\end{equation}%
In this section we will prove existence and uniqueness results concerning
the extremal functions for $Q_{q}(\Omega )$, $\lambda _{D_{p}}(\Omega )$, $%
\lambda _{B_{p}}(\Omega )$ and $\lambda _{A_{p}}(\Omega )$.

Let $\Delta _{q}$ denote the $q$\textit{-Laplacian,} given by $\Delta
_{q}u=\nabla \cdot \left( \left\Vert \nabla u\right\Vert ^{q-2}\nabla
u\right) $, where $1<q<\infty $. \ We define the $q$\textit{-torsion
function }$w_{q}$ on $\Omega $ to be the weak solution of%
\begin{equation}
\left\{ 
\begin{array}{cc}
-\Delta _{q}w_{q}=1 & \text{in }\Omega \\ 
w_{q}=0 & \text{on }\partial \Omega%
\end{array}%
\right. ,  \label{qlaplace}
\end{equation}%
and note from \cite{Lie} that $w_{q}\in C^{1}(\overline{\Omega })$. Further,
we define $w_{\infty }(x)=\mathrm{dist}(x,\partial \Omega )$.

\begin{proposition}
\label{Qq}Let $\Omega \subset \mathbb{R}^{N}$ be a smoothly bounded domain
and $p\in \lbrack 1,\infty )$. \newline
(i) There exists $u\in W_{0}^{1,q}(\Omega )$ such that 
\begin{equation}
Q_{q}(\Omega )=\frac{N}{\Vert \nabla u\Vert _{\mathcal{L}_{q}}}\int_{\Omega
}u~dm.  \label{Ppmax}
\end{equation}%
(ii) The functions $u\in W_{0}^{1,q}(\Omega )$ which satisfy (\ref{Ppmax})
are precisely the positive multiples of $w_{q}$.\newline
(iii) $Q_{q}(\Omega )=N\left( \int_{\Omega }w_{q}dm\right) ^{1/p}$. Further, 
$Q_{q}(\Omega )=N\Vert \nabla w_{q}\Vert _{\mathcal{L}_{q}}^{q-1}$ if $p>1$.%
\newline
(iv) $Q_{q}(B(r))=\left( \dfrac{N}{N+p}m(B)\right) ^{1/p}r^{1+N/p}.$
\end{proposition}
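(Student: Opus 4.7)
The plan is to use a standard calculus-of-variations argument to identify the extremals in (i) and (ii) with the $q$-torsion function $w_q$, deduce (iii) by integration by parts against the defining PDE, and verify (iv) by an explicit radial computation on the ball.

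For (i) and (ii) in the range $1<q<\infty$, the Rayleigh-type quotient defining $Q_q(\Omega)$ is zero-homogeneous, so one may equivalently fix $\|\nabla u\|_{\mathcal{L}_q}=1$ and maximise $\int_\Omega u\,dm$. A maximising sequence is then bounded in $W_0^{1,q}(\Omega)$, so by reflexivity and the Rellich--Kondrachov theorem a subsequence converges weakly in $W_0^{1,q}$ and strongly in $L^1$; weak lower semicontinuity of $\|\nabla\cdot\|_{\mathcal{L}_q}$ together with continuity of the linear functional $u\mapsto\int u\,dm$ secure existence of a maximiser $u$. Taking variations against $\phi\in C_c^\infty(\Omega)$ produces the weak Euler--Lagrange condition
$$\int_\Omega \|\nabla u\|^{q-2}\nabla u\cdot\nabla\phi\,dm = \mu\int_\Omega\phi\,dm,$$
which is exactly $-\Delta_q u=\mu$; homogeneity lets one scale so that $\mu=1$, identifying $u$ with $w_q$. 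Strict convexity of $u\mapsto\|\nabla u\|_{\mathcal{L}_q}^q$ on $W_0^{1,q}$, applied to the midpoint of two putatively distinct maximisers (both with normalised gradient and common value of $\int u\,dm$), forces uniqueness up to positive scalar multiples. The boundary case $q=\infty$ ($p=1$) is handled separately and more simply: any $u\in W_0^{1,\infty}(\Omega)$ with $\|\nabla u\|_{\mathcal{L}_\infty}\le 1$ is $1$-Lipschitz and vanishes on $\partial\Omega$, hence $|u|\le w_\infty$ pointwise, and equality in the supremum forces $u=c\,w_\infty$.

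For (iii), testing the weak equation $-\Delta_q w_q=1$ against $w_q$ itself (legitimate because $w_q\in W_0^{1,q}$) yields
$$\int_\Omega w_q\,dm = \int_\Omega \|\nabla w_q\|^q\,dm = \|\nabla w_q\|_{\mathcal{L}_q}^q.$$
Substituting $u=w_q$ into (\ref{Ppmax}) and using the identity $(q-1)/q=1/p$ then gives
$$Q_q(\Omega)=\frac{N\int_\Omega w_q\,dm}{\|\nabla w_q\|_{\mathcal{L}_q}}=N\|\nabla w_q\|_{\mathcal{L}_q}^{q-1}=N\Bigl(\int_\Omega w_q\,dm\Bigr)^{1/p},$$
with the second equality meaningful only for $p>1$. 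The formula $Q_\infty(\Omega)=N\int_\Omega w_\infty\,dm$ follows directly from the identification of the extremal in the previous step.

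For (iv), radial symmetry forces $w_q(x)=\varphi(|x|)$ with $\varphi(r)=0$. Writing the $q$-Laplacian in spherical coordinates and integrating once, with the constant of integration fixed by boundedness at the origin, gives $\varphi'(\rho)=-(\rho/N)^{p-1}$; a second integration together with the boundary condition produces $w_q(x)=(r^p-|x|^p)/(pN^{p-1})$. A short polar-coordinate computation, using $\int_0^r \rho^{N+p-1}\,d\rho=r^{N+p}/(N+p)$, yields $\int_{B(r)} w_q\,dm = m(B)r^{N+p}/(N^{p-1}(N+p))$, and substituting into the formula from (iii) gives the stated value of $Q_q(B(r))$. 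The only delicate matter is bridging the finite-$q$ variational theory with $q=\infty$, since the Euler--Lagrange PDE degenerates; this is resolved uniformly by the pointwise distance-function comparison used above, which produces the same formulae at $p=1$.
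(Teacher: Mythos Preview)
Your argument is correct and follows essentially the same route as the paper's: direct-method existence via Rellich--Kondrachov and weak lower semicontinuity, Euler--Lagrange identification with the $q$-torsion function, integration by parts for (iii), and an explicit radial computation for (iv). The only cosmetic differences are that the paper uses Arzel\`a--Ascoli rather than the pointwise bound $|u|\le w_\infty$ for existence when $q=\infty$, and relies implicitly on uniqueness for the $q$-Laplace Dirichlet problem rather than your strict-convexity midpoint argument for (ii).
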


\begin{proof}
(i) We choose a maximizing sequence $(u_{j})$ for (\ref{Qdef}) such that $%
\Vert u_{j}\Vert _{W_{0}^{1,q}(\Omega )}=1$ for all $j$. (The quotient in (%
\ref{Qdef}) is unaffected when $u$ is multiplied by a positive constant.)

Firstly, we suppose that $p>1$. In view of the Banach-Alaoglu theorem we can
arrange, by taking a subsequence, that $(u_{j})$ converges weakly to some
non-zero function $u\in W_{0}^{1,q}(\Omega )$. Further, by the
Rellich-Kondrachov theorem (see, for example, Section 5.7 in \cite{Ev}), we
can arrange that $u_{j}\rightarrow u$ strongly in $L^{1}(\Omega )$. Clearly $%
\int_{\Omega }u~dm>0$. By the weak lower semicontinuity of the $\mathcal{L}%
_{q}$-norm, 
\begin{eqnarray*}
Q_{q}(\Omega ) &=&\lim_{j\rightarrow \infty }\frac{N}{\Vert \nabla
u_{j}\Vert _{\mathcal{L}_{q}}}\int_{\Omega }u_{j}~dm \\
&=&\frac{\lim_{j\rightarrow \infty }N\int_{\Omega }u_{j}~dm}{%
\lim_{j\rightarrow \infty }\Vert \nabla u_{j}\Vert _{\mathcal{L}_{q}}}\leq 
\frac{N\int_{\Omega }u~dm}{\Vert \nabla u\Vert _{\mathcal{L}_{q}}}\leq
Q_{q}(\Omega ),
\end{eqnarray*}%
and so equality holds throughout.

If $p=1$, whence $q=\infty $, then we instead appeal to the Arzel\`{a}%
-Ascoli theorem to see that there is a subsequence of $(u_{j})$ that
converges uniformly on $\Omega $, and make use of the fact that each $u_{j}$
can be represented by a Lipschitz function.

(ii) Suppose firstly that $p>1$. For any $\phi \in C_{c}^{\infty }(\Omega )$
we define 
\begin{equation*}
f(t)=\left( Q_{q}(\Omega )\right) ^{q}\int_{\Omega }\Vert \nabla (u+t\phi
)\Vert ^{q}dm-\left( N\int_{\Omega }(u+t\phi )dm\right) ^{q}\text{ \ \ }%
(t\in {\mathbb{R}}).
\end{equation*}%
Since $u$ is a maximizer for $Q_{q}(\Omega )$, we see that $\int_{\Omega
}u~dm>0$ and $f^{\prime }(0)=0$, whence%
\begin{equation*}
\left( Q_{q}(\Omega )\right) ^{q}\int_{\Omega }\left\Vert \nabla
u\right\Vert ^{q-2}\nabla u\cdot \nabla \phi ~dm-N^{q}\left( \int_{\Omega
}u~dm\right) ^{q-1}\int_{\Omega }\phi ~dm=0\text{ \ \ }\left( \phi \in
C_{c}^{\infty }(\Omega )\right) ,
\end{equation*}%
and so%
\begin{equation*}
\int_{\Omega }\left( \left( Q_{q}(\Omega )\right) ^{q}\Delta
_{q}u+N^{q}\left( \int_{\Omega }u~dm\right) ^{q-1}\right) \phi ~dm=0\text{ \
\ }\left( \phi \in C_{c}^{\infty }(\Omega )\right) .
\end{equation*}%
Thus $\Delta _{q}u$ is a negative constant in $\Omega $, and so $u$ is a
positive multiple of $w_{q}$.

Now let $p=1$, so that $q=\infty $. In the formula (\ref{Qdef}) we can
normalize to consider only those functions $u$ such that $\left\Vert \nabla
u\right\Vert _{\mathcal{L}_{\infty }}=1$, whence $u$ is majorized by the
Lipschitz function$\ w_{\infty }$. Further, the supremum can only be
attained among functions $u$ satisfying $\left\Vert \nabla u\right\Vert _{%
\mathcal{L}_{\infty }}=1$ by the function $w_{\infty }$. More generally, the
supremum can only be attained by a positive multiple of $w_{\infty }$.

(iii) If $p>1$, then we see from (\ref{qlaplace}) that%
\begin{equation*}
\int_{\Omega }w_{q}~dm=\int_{\Omega }w_{q}(-\Delta
_{q}w_{q})~dm=\int_{\Omega }\left\Vert \nabla w_{q}\right\Vert ^{q-2}\nabla
w_{q}\cdot \nabla w_{q}~dm=\left\Vert \nabla w_{q}\right\Vert _{\mathcal{L}%
_{q}}^{q}.
\end{equation*}%
Hence, by parts (i) and (ii), $Q_{q}(\Omega )=N\Vert \nabla w_{q}\Vert _{%
\mathcal{L}_{q}}^{q-1}=N\left( \int_{\Omega }w_{q}dm\right) ^{1/p}$.

If $p=1$, then it is immediate that $Q_{\infty }(\Omega )=N\int_{\Omega
}w_{\infty }dm$.

(iv) If $\Omega =B(r)$, then $w_{q}(x)$ is clearly a multiple of $x\mapsto
r^{p}-\left\Vert x\right\Vert ^{p}$. Letting $u(x)=\left( r^{p}-\left\Vert
x\right\Vert ^{p}\right) /p$, we have $\left\Vert \nabla u\right\Vert
=\left\Vert x\right\Vert ^{p-1}$ and%
\begin{equation*}
\Vert \nabla u\Vert _{\mathcal{L}_{q}}=\left( \frac{Nm(B)}{N+p}%
r^{N+p}\right) ^{1/q},\text{ \ \ }\int_{\Omega }u~dm=\frac{m(B)}{N+p}r^{N+p}.
\end{equation*}%
Thus, by parts (i) and (ii),%
\begin{equation*}
Q_{q}(B(r))=\frac{N}{\Vert \nabla u\Vert _{\mathcal{L}_{q}}}\int_{\Omega
}u~dm=\left( \frac{N}{N+p}m(B)\right) ^{1/p}r^{1+N/p}.
\end{equation*}
\end{proof}

\bigskip

As usual, we define%
\begin{equation*}
D_{p}(\Omega )^{\perp }=\left\{ g\in \mathcal{L}_{q}(\Omega ):\int_{\Omega
}f\cdot g~dm=0\text{ for all }f\in D_{p}(\Omega )\right\} ;
\end{equation*}%
$B_{p}(\Omega )^{\perp }$ and $A_{p}(\Omega )^{\perp }$ are defined
analogously.

\begin{proposition}
\label{Dp}Let $\Omega \subset \mathbb{R}^{N}$ be a smoothly bounded domain
and $p\in (1,\infty )$. \newline
(i) There exists $f_{0}\in D_{p}(\Omega )$ such that $\lambda
_{D_{p}}(\Omega )=\left\Vert x-f_{0}\right\Vert _{\mathcal{L}_{p}}$.\newline
(ii) This function $f_{0}$ satisfies $\left\Vert x-f_{0}\right\Vert
^{p-2}(x-f_{0})\in D_{p}(\Omega )^{\perp }$.\newline
(iii) There exists $u_{0}\in W_{0}^{1,q}(\Omega )$ such that $\nabla
u_{0}=-\left\Vert x-f_{0}\right\Vert ^{p-2}(x-f_{0})$.\newline
(iv) The function $u_{0}$ is a positive multiple of $w_{q}$, and\ 
\begin{equation}
\lambda _{D_{p}}(\Omega )=Q_{q}(\Omega )=\dfrac{-1}{\Vert \nabla w_{q}\Vert
_{\mathcal{L}_{q}}}\dint_{\Omega }(x-f)\cdot \nabla w_{q}~dm\ \ \ \ (f\in
D_{p}(\Omega )).  \label{QqDp}
\end{equation}
\end{proposition}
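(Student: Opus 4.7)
For part (i), the plan is to observe that $D_{p}(\Omega)$ is a closed linear subspace of $\mathcal{L}_{p}$ (since the distributional divergence is continuous with respect to $\mathcal{L}_{p}$-convergence). Because $\mathcal{L}_{p}$ is uniformly convex when $1<p<\infty$, the best approximation to $x$ from $D_{p}(\Omega)$ exists and is unique, which yields $f_{0}$. For part (ii), I would use a standard variational argument: for any $g\in D_{p}(\Omega)$ the scalar function $t\mapsto \|x-f_{0}-tg\|_{\mathcal{L}_{p}}^{p}$ has a minimum at $t=0$, and differentiating under the integral gives
\[
\int_{\Omega}\|x-f_{0}\|^{p-2}(x-f_{0})\cdot g\,dm=0\qquad (g\in D_{p}(\Omega)),
\]
which is exactly the orthogonality relation.

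Part (iii) is the heart of the proof: identifying the annihilator $D_{p}(\Omega)^{\perp}\subset\mathcal{L}_{q}$. The plan is to show that a vector field $v\in\mathcal{L}_{q}$ lies in $D_{p}(\Omega)^{\perp}$ if and only if $v=\nabla u$ for some $u\in W_{0}^{1,q}(\Omega)$. One direction is immediate by integration by parts against $f\in C_{c}^{\infty}$ divergence-free test fields. The reverse direction is a Helmholtz-type decomposition on a smoothly bounded domain; the main obstacle is verifying that $u$ has zero trace, which will follow from density of divergence-free $C_{c}^{\infty}$ fields together with regularity of $\partial\Omega$. Applying this with $v=-\|x-f_{0}\|^{p-2}(x-f_{0})$ produces the desired $u_{0}\in W_{0}^{1,q}(\Omega)$.

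For part (iv), the key computation is a direct algebraic one. Writing $\nabla u_{0}=-\|x-f_{0}\|^{p-2}(x-f_{0})$ and using $(p-1)(q-1)=1$, I would check that
\[
\|\nabla u_{0}\|^{q-2}\nabla u_{0}=-\|x-f_{0}\|^{(p-1)(q-2)+(p-2)+1-1}(x-f_{0})=-(x-f_{0}),
\]
so that (since $f_{0}\in D_{p}(\Omega)$)
\[
-\Delta_{q}u_{0}=\operatorname{div}(x-f_{0})=N
\]
in the distributional sense, with $u_{0}=0$ on $\partial\Omega$. Uniqueness for the $q$-Laplace problem and the homogeneity $\Delta_{q}(cw)=|c|^{q-2}c\,\Delta_{q}w$ then force $u_{0}=N^{p-1}w_{q}$, which is indeed a positive multiple of $w_{q}$.

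Finally, to derive the identity in (\ref{QqDp}), I would use $\|\nabla u_{0}\|_{\mathcal{L}_{q}}^{q}=\int_{\Omega}\|x-f_{0}\|^{(p-1)q}dm=\lambda_{D_{p}}(\Omega)^{p}$, combined with the relation $p(q-1)/q=1$, to obtain $\lambda_{D_{p}}(\Omega)=\|\nabla u_{0}\|_{\mathcal{L}_{q}}^{q-1}=N\|\nabla w_{q}\|_{\mathcal{L}_{q}}^{q-1}=Q_{q}(\Omega)$ by Proposition~\ref{Qq}(iii). For the integral representation, since $\nabla w_{q}$ is a scalar multiple of $\nabla u_{0}\in D_{p}(\Omega)^{\perp}$, the term $\int f\cdot\nabla w_{q}\,dm$ vanishes for any $f\in D_{p}(\Omega)$, and integration by parts together with Proposition~\ref{Qq}(iii) gives
\[
\frac{-1}{\|\nabla w_{q}\|_{\mathcal{L}_{q}}}\int_{\Omega}(x-f)\cdot\nabla w_{q}\,dm=\frac{N\int_{\Omega}w_{q}\,dm}{\|\nabla w_{q}\|_{\mathcal{L}_{q}}}=N\|\nabla w_{q}\|_{\mathcal{L}_{q}}^{q-1}=Q_{q}(\Omega).
\]
The anticipated main difficulty is the Helmholtz decomposition step in (iii); the rest is bookkeeping with the conjugate exponent identities.
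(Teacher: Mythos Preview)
Your proposal is correct, but parts (iii) and (iv) take a somewhat different route from the paper.

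For (iii), both you and the paper aim to show that $D_{p}(\Omega)^{\perp}=\{\nabla u:u\in W_{0}^{1,q}(\Omega)\}$, but the paper avoids any Helmholtz-type argument. It simply observes that, by \emph{definition} of distributional divergence, $D_{p}(\Omega)$ is the pre-annihilator (in $\mathcal{L}_{p}$) of the set $\{\nabla\phi:\phi\in C_{c}^{\infty}(\Omega)\}\subset\mathcal{L}_{q}$; the bipolar/Hahn--Banach theorem then gives $D_{p}(\Omega)^{\perp}$ as the $\mathcal{L}_{q}$-closure of these gradients, and Poincar\'{e}'s inequality on $W_{0}^{1,q}(\Omega)$ identifies that closure with $\{\nabla u:u\in W_{0}^{1,q}(\Omega)\}$. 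This is shorter and sidesteps the density-of-smooth-divergence-free-fields issue you flag as the main obstacle.

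For (iv), your argument is genuinely different and arguably more direct: you compute $\|\nabla u_{0}\|^{q-2}\nabla u_{0}=-(x-f_{0})$, read off $-\Delta_{q}u_{0}=N$, and invoke uniqueness for the $q$-torsion problem to get $u_{0}=N^{p-1}w_{q}$; the identity $\lambda_{D_{p}}(\Omega)=Q_{q}(\Omega)$ then drops out of the exponent algebra. The paper instead runs a H\"{o}lder-inequality argument: for any $u\in W_{0}^{1,q}(\Omega)$ it writes $N\int u\,dm=-\int(x-f_{0})\cdot\nabla u\,dm\leq\|x-f_{0}\|_{\mathcal{L}_{p}}\|\nabla u\|_{\mathcal{L}_{q}}$, notes that equality forces $\nabla u$ to be a negative multiple of $\|x-f_{0}\|^{p-2}(x-f_{0})$, and then appeals to Proposition~\ref{Qq}(ii) to conclude that $u_{0}$ is a positive multiple of $w_{q}$. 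Your approach buys an explicit constant and uses only the PDE; the paper's approach makes the link to the variational definition of $Q_{q}$ more transparent. Part (i) via uniform convexity and part (ii) are essentially the same as the paper's (which uses weak compactness and the same first-variation computation).
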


\begin{proof}
(i) We choose a sequence $(f_{j})$ in $D_{p}(\Omega )$ such that $\left\Vert
x-f_{j}\right\Vert _{\mathcal{L}_{p}}\rightarrow \lambda _{D_{p}}(\Omega )$.
By weak compactness we can arrange, by choosing a suitable subsequence, that 
$(f_{j})$ is weakly convergent to some $f_{0}$ in $\mathcal{L}_{p}$.
Further, $\func{div}f_{0}=0$ on $\Omega $ in the sense of distributions, so $%
f_{0}\in D_{p}(\Omega )$. Finally, $\lambda _{D_{p}}(\Omega )=\left\Vert
x-f_{0}\right\Vert _{\mathcal{L}_{p}}$ by the weak lower semicontinuity of
the norm.

(ii) For any $g\in D_{p}(\Omega )$ we can differentiate the function $%
t\mapsto \int_{\Omega }\left\Vert x-f_{0}-tg\right\Vert ^{p}dm$ and then set 
$t=0$ to see that 
\begin{equation*}
\int_{\Omega }\left\Vert x-f_{0}\right\Vert ^{p-2}(x-f_{0})\cdot g~dm=0.
\end{equation*}

(iii) If $f\in \mathcal{L}_{p}$, then by definition, 
\begin{equation}
f\in D_{p}(\Omega )\Leftrightarrow \int_{\Omega }f\cdot \nabla \phi ~dm=0%
\text{\ \ \ \ }(\phi \in C_{c}^{\infty }(\Omega )).  \label{defDp}
\end{equation}%
Hence%
\begin{equation}
D_{p}(\Omega )^{\perp }=\overline{\{\nabla \phi :\phi \in C_{c}^{\infty
}(\Omega )\}}^{\mathcal{L}_{q}(\Omega )}  \label{Dpperp}
\end{equation}%
since, if $g\in \mathcal{L}_{q}(\Omega )$ does not belong to the above
closure, the Hahn-Banach theorem would yield the existence of $f\in \mathcal{%
L}_{q}^{\ast }(\Omega )\equiv \mathcal{L}_{p}(\Omega )$ such that%
\begin{equation*}
\int_{\Omega }f\cdot g~dm=1,\text{ \ \ \ }\int_{\Omega }f\cdot \nabla \phi
~dm=0\text{ \ \ \ }(\phi \in C_{c}^{\infty }(\Omega )),
\end{equation*}%
whence $f\in D_{p}(\Omega )$\ and so $g\notin D_{p}(\Omega )^{\perp }$.

We claim next that%
\begin{equation}
D_{p}(\Omega )^{\perp }=\{\nabla u:u\in W_{0}^{1,q}(\Omega )\}.
\label{Dpperpb}
\end{equation}%
Clearly the right hand side of (\ref{Dpperpb}) is contained in the right
hand side of (\ref{Dpperp}). To see the reverse inclusion, let $(\phi _{k})$
be a sequence in $C_{c}^{\infty }(\Omega )$ such that $(\nabla \phi _{k})$
converges in $\mathcal{L}_{q}(\Omega )$. Then $(\phi _{k})$ is Cauchy in $%
\mathcal{L}_{q}(\Omega )$, by Poincar\'{e}'s inequality for $%
W_{0}^{1,q}(\Omega )$. It follows that $(\phi _{k})$ converges in $%
W_{0}^{1,q}(\Omega )$ to some function $u$ and $\lim_{k\rightarrow \infty
}\nabla \phi _{k}=\nabla u$. Hence (\ref{Dpperpb}) holds, and the desired
conclusion now follows from part (ii).

(iv) By the divergence theorem, (\ref{Dpperpb}) and H\"{o}lder's inequality,%
\begin{eqnarray*}
\dfrac{N}{\Vert \nabla u\Vert _{\mathcal{L}_{q}}}\dint_{\Omega }u~dm &=&%
\dfrac{-1}{\Vert \nabla u\Vert _{\mathcal{L}_{q}}}\dint_{\Omega }x\cdot
\nabla u~dm \\
&=&\dfrac{-1}{\Vert \nabla u\Vert _{\mathcal{L}_{q}}}\dint_{\Omega
}(x-f_{0})\cdot \nabla u~dm \\
&\leq &\left\Vert x-f_{0}\right\Vert _{\mathcal{L}_{p}}\text{ \ \ \ \ \ }%
(u\in W_{0}^{1,q}(\Omega )\backslash \{0\}),
\end{eqnarray*}%
with equality precisely when $\nabla u$ is a negative multiple of $%
\left\Vert x-f_{0}\right\Vert ^{p-2}(x-f_{0})$. It now follows from (\ref%
{Qdef}) and Proposition \ref{Qq}(ii) that $u_{0}$ is a positive multiple of $%
w_{q}$, and from part (i) and (\ref{Dpperpb}) that (\ref{QqDp}) holds.
\end{proof}

\bigskip

The next result shows that (\ref{QqDp}) also holds when $p=1$. Inequality (%
\ref{LAP}) will follow from (\ref{LDeq}) in view of (\ref{Lineq}).

\begin{proposition}
\label{LD}If $\Omega \subset \mathbb{R}^{N}$ is a smoothly bounded domain
and $p\in \lbrack 1,\infty )$, then 
\begin{equation}
\lambda _{D_{p}}(\Omega )=Q_{q}(\Omega )=\dfrac{-1}{\Vert \nabla w_{q}\Vert
_{\mathcal{L}_{q}}}\int_{\Omega }(x-f)\cdot \nabla w_{q}~dm\text{ \ \ \ }%
(f\in D_{p}(\Omega )).  \label{LDeq}
\end{equation}
\end{proposition}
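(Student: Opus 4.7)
The case $p \in (1, \infty)$ is Proposition \ref{Dp}(iv), so I focus on $p = 1$ and $q = \infty$. Recall that $w_\infty(x) = \mathrm{dist}(x, \partial\Omega)$ is $1$-Lipschitz with $\|\nabla w_\infty\|_{\mathcal{L}_\infty} = 1$ a.e., and $Q_\infty(\Omega) = N \int_\Omega w_\infty \, dm$ by Proposition \ref{Qq}(iii). The strategy is to extend the defining relation (\ref{defDp}) of $D_1(\Omega)$ from $C_c^\infty(\Omega)$ to $W_0^{1,\infty}(\Omega)$ by mollification, and then combine the resulting orthogonality with Banach space duality.

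For the approximation, given $u \in W_0^{1,\infty}(\Omega)$, truncation (to obtain compact support in $\Omega$) followed by standard mollification yields $\phi_\epsilon \in C_c^\infty(\Omega)$ with $\phi_\epsilon \to u$ uniformly, $\nabla \phi_\epsilon \to \nabla u$ a.e., and $\|\nabla \phi_\epsilon\|_{\mathcal{L}_\infty} \leq \|\nabla u\|_{\mathcal{L}_\infty}$. For any $f \in D_1(\Omega)$, (\ref{defDp}) gives $\int_\Omega f \cdot \nabla \phi_\epsilon \, dm = 0$, while integration by parts against the smooth field $x$ yields $\int_\Omega x \cdot \nabla \phi_\epsilon \, dm = -N \int_\Omega \phi_\epsilon \, dm$. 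Dominated convergence as $\epsilon \to 0$ (justified because $f, x \in \mathcal{L}_1$ on the bounded domain $\Omega$ and the gradients $\nabla \phi_\epsilon$ are uniformly bounded) upgrades these to
\[
\int_\Omega f \cdot \nabla u \, dm = 0 \quad \text{and} \quad \int_\Omega x \cdot \nabla u \, dm = -N \int_\Omega u \, dm \qquad (f \in D_1(\Omega),\, u \in W_0^{1,\infty}(\Omega)).
\]
Taking $u = w_\infty$ and using $\|\nabla w_\infty\|_{\mathcal{L}_\infty} = 1$, this gives $-\int_\Omega (x-f)\cdot \nabla w_\infty\, dm = N \int_\Omega w_\infty \, dm = Q_\infty(\Omega)$, which is the rightmost equality in (\ref{LDeq}).

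H\"older's inequality applied to this identity yields $Q_\infty(\Omega) \leq \|x-f\|_{\mathcal{L}_1}$ for every $f \in D_1(\Omega)$, and hence $Q_\infty(\Omega) \leq \lambda_{D_1}(\Omega)$. For the reverse bound I invoke duality: $D_1(\Omega)$ is norm-closed in $\mathcal{L}_1$, and since $\mathcal{L}_1^* = \mathcal{L}_\infty$,
\[
\lambda_{D_1}(\Omega) = \sup\left\{\int_\Omega x \cdot g \, dm : g \in D_1(\Omega)^\perp \cap \mathcal{L}_\infty,\ \|g\|_{\mathcal{L}_\infty} \leq 1\right\}.
\]
The first displayed identity shows $\{\nabla u : u \in W_0^{1,\infty}(\Omega)\} \subset D_1(\Omega)^\perp$. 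Conversely, any $g \in \mathcal{L}_\infty$ annihilating $D_1(\Omega)$ also annihilates $D_2(\Omega) \subset D_1(\Omega)$ (using boundedness of $\Omega$); Proposition \ref{Dp}(iii) with $p = 2$ then yields $g = \nabla u$ for some $u \in W_0^{1,2}(\Omega)$, and since $\nabla u = g \in \mathcal{L}_\infty$, $u$ admits a Lipschitz representative vanishing on $\partial\Omega$, so $u \in W_0^{1,\infty}(\Omega)$.

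Thus the supremum equals $\sup\{-N \int_\Omega u\, dm : u \in W_0^{1,\infty}(\Omega),\, \|\nabla u\|_{\mathcal{L}_\infty} \leq 1\}$. Any admissible $u$ is $1$-Lipschitz with zero boundary values, hence $|u| \leq w_\infty$ pointwise; taking $u = -w_\infty$ attains the bound and gives $\lambda_{D_1}(\Omega) = N \int_\Omega w_\infty \, dm = Q_\infty(\Omega)$. The principal obstacle is the converse identification of $D_1(\Omega)^\perp \cap \mathcal{L}_\infty$, which relies on bootstrapping from the $p=2$ case in Proposition \ref{Dp} together with the regularity upgrade from $W_0^{1,2}$ to $W_0^{1,\infty}$.
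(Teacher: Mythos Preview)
Your argument is correct, and for the lower bound $Q_{\infty}(\Omega)\le\lambda_{D_1}(\Omega)$ and for the rightmost equality in (\ref{LDeq}) it runs parallel to the paper's: both mollify a truncation of $w_{\infty}$ (you do it for every $u\in W_0^{1,\infty}(\Omega)$) to show $\nabla w_{\infty}\in D_1(\Omega)^{\perp}$, and then apply the divergence theorem together with $\Vert\nabla w_{\infty}\Vert_{\mathcal{L}_\infty}=1$.

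Where the two proofs diverge is in the upper bound $\lambda_{D_1}(\Omega)\le Q_{\infty}(\Omega)$. The paper obtains this by a limiting argument in the exponent: it uses that $p\mapsto m(\Omega)^{-1/p}\lambda_{D_p}(\Omega)$ is nondecreasing, combines this with Proposition~\ref{Qq}(iii) and Proposition~\ref{Dp}(iv), and then invokes Kawohl's result that $w_q\to w_{\infty}$ uniformly as $q\to\infty$ to let $p\downarrow 1$. You instead argue directly by duality: you note that $D_1(\Omega)$ is closed in $\mathcal{L}_1$, write $\lambda_{D_1}(\Omega)$ as a supremum over the unit ball of $D_1(\Omega)^{\perp}\subset\mathcal{L}_\infty$, and identify this annihilator with $\{\nabla u:u\in W_0^{1,\infty}(\Omega)\}$ by bootstrapping from the $p=2$ case of (\ref{Dpperpb}) together with the observation that a $W_0^{1,2}$ function with essentially bounded gradient lies in $W_0^{1,\infty}$. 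The paper's route is shorter but imports an external PDE result; yours is self-contained within the paper's framework, at the cost of the extra step of characterising $D_1(\Omega)^{\perp}$.
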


\begin{proof}
We know from Theorem 1 of \cite{Kaw} that $w_{q}\rightarrow w_{\infty }$
uniformly on $\Omega $ as $q\rightarrow \infty $. Since the function $%
p\mapsto \left( m(\Omega )\right) ^{-1/p}\lambda _{D_{p}}(\Omega )$ is
increasing, we see from Propositions \ref{Dp}(iv) and \ref{Qq}(iii) that%
\begin{eqnarray}
\lambda _{D_{1}}(\Omega ) &\leq &\left( m(\Omega )\right) ^{1-1/p}\lambda
_{D_{p}}(\Omega )=N\left( m(\Omega )\right) ^{1-1/p}\left( \int_{\Omega
}w_{q}~dm\right) ^{1/p}  \notag \\
&\rightarrow &N\int_{\Omega }w_{\infty }~dm\text{ \ \ \ }(p\rightarrow 1) 
\notag \\
&=&Q_{\infty }(\Omega ).  \label{ineq1}
\end{eqnarray}%
For large $k\in \mathbb{N}$ let $v_{k}$ be a mollification of $(w_{\infty
}-k^{-1})^{+}$ that belongs to $C_{c}^{\infty }(\Omega )$. Since $\nabla
v_{k}\in D_{1}(\Omega )^{\perp }$ by (\ref{defDp}), and $(\nabla v_{k})$ is
boundedly convergent almost everywhere to $\nabla w_{\infty }$, we see that $%
\nabla w_{\infty }\in D_{1}(\Omega )^{\perp }$. Thus, by the divergence
theorem,%
\begin{eqnarray*}
Q_{\infty }(\Omega ) &=&N\int w_{\infty }~dm=-\int_{\Omega }x\cdot \nabla
w_{\infty }~dm \\
&=&-\int_{\Omega }(x-f)\cdot \nabla w_{\infty }~dm\leq \left\Vert
x-f\right\Vert _{\mathcal{L}_{1}}\text{ \ \ \ }(f\in D_{1}(\Omega )).
\end{eqnarray*}%
Hence $Q_{\infty }(\Omega )\leq \lambda _{D_{1}}(\Omega )$, and (\ref{LDeq})
follows in view of (\ref{ineq1}).
\end{proof}

\bigskip

We note that 
\begin{equation}
\lambda _{B_{p}}(\Omega )=\inf \{\Vert \nabla u\Vert _{\mathcal{L}_{p}}:u\in
W^{1,p}(\Omega )\text{ \ and \ }\Delta u=Nm\text{ in }\Omega \}.
\label{altLBN}
\end{equation}

\begin{proposition}
\label{Bpm}Let $p\in \lbrack 1,\infty )$. \newline
(i) There exists $f\in B_{p}(\Omega )$ such that $\lambda _{B_{p}}(\Omega
)=\left\Vert x-f\right\Vert _{\mathcal{L}_{p}}$; equivalently, there exists $%
u_{0}\in W^{1,p}(\Omega )$ such that $\Delta u_{0}=Nm$ in $\Omega $ and $%
\lambda _{B_{p}}(\Omega )=\Vert \nabla u_{0}\Vert _{\mathcal{L}_{p}}$.%
\newline
(ii) The function $u_{0}$ satisfies $\Vert \nabla u_{0}\Vert ^{p-2}\nabla
u_{0}\in B_{p}(\Omega )^{\perp }$.\newline
(iii) The function $f_{0}=\Vert \nabla u_{0}\Vert ^{p-2}\nabla u_{0}\in 
\mathcal{L}_{q}$ satisfies $\lambda _{B_{p}}(\Omega )=\left( \int_{\Omega
}f_{0}\cdot x~dm\right) /\Vert f_{0}\Vert _{\mathcal{L}_{q}}.$\newline
(iv) The function $u_{0}$ is unique up to an additive constant, and $\nabla
u_{0}$ is uniquely determined by the properties 
\begin{equation}
\left\{ 
\begin{array}{l}
\Vert \nabla u_{0}\Vert ^{p-2}\nabla u_{0}\in B_{p}(\Omega )^{\perp } \\ 
\Delta u_{0}=Nm\text{ in }\Omega%
\end{array}%
\right. .  \label{uchar}
\end{equation}
\end{proposition}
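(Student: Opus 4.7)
The plan is to treat the range $1<p<\infty$ by the direct method of the calculus of variations together with an Euler--Lagrange calculation, and then to handle the boundary case $p=1$ by a separate limiting argument analogous to the one that links Propositions~\ref{Dp} and~\ref{LD}. Throughout I will use the equivalent formulation~(\ref{altLBN}), exploiting that $x=\nabla(\|x\|^2/2)$ so that $x-\nabla h=\nabla(\|x\|^2/2-h)$ has Laplacian~$N$ precisely when $h$ is harmonic; this immediately gives the equivalence asserted in~(i).

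For part~(i) with $1<p<\infty$, I would pick a minimizing sequence $(u_k)\subset W^{1,p}(\Omega)$ with $\Delta u_k=Nm$ and $\|\nabla u_k\|_{\mathcal{L}_p}\to\lambda_{B_p}(\Omega)$, normalized so that $\int_\Omega u_k\,dm=0$. The Poincar\'e--Wirtinger inequality bounds $(u_k)$ in $W^{1,p}(\Omega)$; reflexivity yields a weak limit $u_0$; the equation passes to the distributional limit; and weak lower semicontinuity of $\|\nabla\cdot\|_{\mathcal{L}_p}$ delivers $\|\nabla u_0\|_{\mathcal{L}_p}=\lambda_{B_p}(\Omega)$.

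For parts~(ii) and~(iii), I would differentiate $t\mapsto\int_\Omega\|\nabla u_0+t\nabla v\|^p\,dm$ at $t=0$ along arbitrary $\nabla v\in B_p(\Omega)$: since the admissible class for~(\ref{altLBN}) is affine over $B_p(\Omega)$, vanishing of the first variation yields the orthogonality $\|\nabla u_0\|^{p-2}\nabla u_0\in B_p(\Omega)^\perp$ of~(ii). For~(iii), writing $x=\nabla u_0+f$ with $f\in B_p(\Omega)$ supplied by~(i), the orthogonality kills the $f_0\cdot f$ contribution, giving $\int_\Omega f_0\cdot x\,dm=\int_\Omega\|\nabla u_0\|^p\,dm=\lambda_{B_p}(\Omega)^p$; since $\|f_0\|_{\mathcal{L}_q}^q$ equals the same quantity, division yields the stated formula.

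For part~(iv) with $1<p<\infty$, strict convexity of $u\mapsto\int_\Omega\|\nabla u\|^p\,dm$ along the affine constraint forces any two minimizers to agree in gradient a.e., so $u_0$ is unique up to additive constants on the connected $\Omega$. Conversely, the pointwise inequality $\|\xi+\eta\|^p\ge\|\xi\|^p+p\|\xi\|^{p-2}\xi\cdot\eta$ applied with $\xi=\nabla u_0$ and $\eta=\nabla h$, combined with the orthogonality in~(\ref{uchar}), shows that any $u_0$ satisfying (\ref{uchar}) is a minimizer, and uniqueness then pins down $\nabla u_0$. The main obstacle will be the case $p=1$, since $\mathcal{L}_1$ is neither reflexive (compromising the compactness step) nor strictly convex (compromising uniqueness). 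I would address it by examining the family $u_0^{(p)}$ of minimizers for $p>1$ and passing to the limit $p\downarrow 1$, using monotonicity of the normalized $\mathcal{L}_p$-norms in the same spirit as the derivation of Proposition~\ref{LD} from Proposition~\ref{Dp}, and verifying (\ref{uchar}) directly in this limit.
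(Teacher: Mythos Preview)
For $1<p<\infty$ your outline matches the paper's proof closely: minimizing sequence normalized to mean zero, Poincar\'e, compactness, first-variation orthogonality for (ii), the substitution $h=u_0-\|x\|^2/2$ for (iii), and H\"older/strict convexity for (iv). One technical difference worth noting in (i): rather than invoking reflexivity of $W^{1,p}$, the paper uses Rellich--Kondrachov to get strong $L^1$ convergence and then exploits the constraint $\Delta u_j=Nm$ (so $u_j-\|x\|^2/2$ is harmonic) to upgrade this to locally uniform $C^1$ convergence; the $\mathcal{L}_p$ bound on $\nabla u_0$ is then recovered by duality against $(C_c^\infty)^N$. The payoff is that this argument works verbatim for $p=1$, with only a small adjustment to the duality step (pointwise-a.e.\ approximation of $L^\infty$ test functions in place of norm density).

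The substantive divergence is your treatment of $p=1$. The paper does \emph{not} use a limiting argument $p\downarrow 1$; it handles $p=1$ directly in each part, and the obstacles you flag turn out to be illusory once the elliptic constraint is used. Your proposed limit along $u_0^{(p)}$ could plausibly deliver existence in (i) and the orthogonality (ii)--(iii), but it does not address uniqueness in (iv): passing to the limit produces \emph{one} function satisfying (\ref{uchar}), whereas (iv) asserts that \emph{any} function satisfying (\ref{uchar}) has the same gradient. For that you still need a direct argument at $p=1$. The paper supplies one: given two such functions $u_0,v$, test each orthogonality relation against the harmonic difference $v-u_0$ to obtain $\int\|\nabla u_0\|\,dm=\int\|\nabla u_0\|^{-1}\nabla u_0\cdot\nabla v\,dm$ and the symmetric identity, and conclude $\nabla u_0=\nabla v$ from the equality case of the pointwise inequality $a\cdot b\le\|a\|\,\|b\|$ (so, as the paper remarks, H\"older is unnecessary when $p=1$).
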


\begin{proof}
(i) We can choose a minimizing sequence $(u_{j})$ for (\ref{altLBN}), where $%
\int_{\Omega }u_{j}~dm=0$ for each $j$. By Poincar\'{e}'s inequality $%
(u_{j}) $ is bounded in $W^{1,p}(\Omega )$, and by the Rellich-Kondrachov
theorem we can arrange that $(u_{j})$ converges strongly in $L^{1}(\Omega )$
to a function $u_{0}$. Since the functions $\{u_{j}:j\geq 0\}$ all have
distributional Laplacian equal to $Nm$, we can choose smooth representatives
of these functions and arrange that $u_{j}\rightarrow u_{0}$ and $\partial
u_{j}/\partial x_{i}\rightarrow \partial u_{0}/\partial x_{i}$ locally
uniformly on $\Omega $ for each $i$. Now%
\begin{equation*}
\left\vert \int_{\Omega }\nabla u_{j}\cdot \phi ~dm\right\vert \leq
\left\Vert \nabla u_{j}\right\Vert _{\mathcal{L}_{p}}\left\Vert \phi
\right\Vert _{\mathcal{L}_{q}}\text{ \ \ \ }(\phi \in \left( C_{c}^{\infty
}(\Omega )\right) ^{N};j\geq 1),
\end{equation*}%
so we can let $j\rightarrow \infty $ and use the density of $C_{c}^{\infty
}(\Omega )$ in $L^{q}(\Omega )$ to see that $\left\Vert \nabla
u_{0}\right\Vert _{\mathcal{L}_{p}}\leq \lambda _{B_{p}}(\Omega )$. (When $%
p=1$ and so $q=\infty $, we instead use the fact that, for any $g\in \left(
L^{\infty }(\Omega )\right) ^{N}$, there is a sequence $(\phi _{n})$ in $%
\left( C_{c}^{\infty }(\Omega )\right) ^{N}$ that converges pointwise almost
everywhere to $g$ on $\Omega $ and satisfies $\sup_{\Omega }\left\Vert \phi
_{n}\right\Vert \leq \mathrm{ess}\sup_{\Omega }\left\Vert g\right\Vert $ for
all $n$.) Similarly, $u_{0}\in L^{p}(\Omega )$, so $u_{0}\in W^{1,p}(\Omega
) $ and $u_{0}$ is a minimizer for (\ref{altLBN}).

(ii) Given any $h\in W^{1,p}(\Omega )\cap C^{2}(\Omega )$ such that $\Delta
h=0$ on $\Omega $, we differentiate $\Vert \nabla (u_{0}+th)\Vert _{\mathcal{%
L}_{p}}^{p}$ with respect to $t$ and then put $t=0$ to see that 
\begin{equation}
\int_{\Omega }\Vert \nabla u_{0}\Vert ^{p-2}\nabla u_{0}\cdot \nabla h~dm=0.
\label{orth}
\end{equation}%
(When $p=1$, we know that $m(\{\Vert \nabla u_{0}||=0\})=0$, and the above
equation still follows by dominated convergence, since $\left\vert
\left\Vert \nabla (u_{0}+th)\right\Vert -\left\Vert \nabla u_{0}\right\Vert
\right\vert /t\leq \left\Vert \nabla h\right\Vert $.) Thus $\Vert \nabla
u_{0}\Vert ^{p-2}\nabla u_{0}\in B_{p}(\Omega )^{\perp }$.

(iii) If we take $h=u_{0}-\left\Vert x\right\Vert ^{2}/2$ in (\ref{orth}),
then we find that%
\begin{equation*}
\int_{\Omega }f_{0}\cdot x~dm=\int_{\Omega }\Vert \nabla u_{0}\Vert ^{p}~dm.
\end{equation*}%
Since $\Vert f_{0}\Vert _{\mathcal{L}_{q}}=\Vert \nabla u_{0}\Vert _{%
\mathcal{L}_{p}}^{p/q}$, we obtain the desired equality.

(iv) In view of parts (i) and (ii) it only remains to check that (\ref{uchar}%
) uniquely determines $u_{0}$ up to a constant. (When $p>1$, the uniqueness
of the gradient $\nabla u_{0}$ also follows from the strict convexity of the 
$\mathcal{L}_{p}$-norm.) To see this, let $v$ be another such function and
consider the harmonic function $v-u_{0}$. It follows from (\ref{orth}) that 
\begin{equation*}
\int_{\Omega }\Vert \nabla u_{0}\Vert ^{p}~dm=\int_{\Omega }\Vert \nabla
u_{0}\Vert ^{p-2}\nabla u_{0}\cdot \nabla v~dm
\end{equation*}%
and%
\begin{equation*}
\int_{\Omega }\Vert \nabla v\Vert ^{p}~dm=\int_{\Omega }\Vert \nabla v\Vert
^{p-2}\nabla v\cdot \nabla u_{0}~dm.
\end{equation*}%
H\"{o}lder's inequality now shows that $\left\Vert \nabla u_{0}\right\Vert _{%
\mathcal{L}_{p}}=\left\Vert \nabla v\right\Vert _{\mathcal{L}_{p}}$, and we
deduce that $\nabla u_{0}\equiv \nabla v$. (If $p=1$, then H\"{o}lder's
inequality is unnecessary.)
\end{proof}

\begin{proposition}
\label{Apm}Let $p\in \lbrack 1,\infty )$. \newline
(i) There exists $f\in A_{p}(\Omega )$ such that $\lambda _{A_{p}}(\Omega
)=\left\Vert x-f\right\Vert _{\mathcal{L}_{p}}$.\newline
(ii) The function $f$ satisfies $\Vert x-f\Vert ^{p-2}(x-f)\in A_{p}(\Omega
)^{\perp }$.\newline
(iii) The function $f_{0}=\Vert x-f\Vert ^{p-2}(x-f)\in \mathcal{L}_{q}$
satisfies $\lambda _{A_{p}}(\Omega )=\left( \int_{\Omega }f_{0}\cdot
x~dm\right) /\Vert f_{0}\Vert _{\mathcal{L}_{q}}.$\newline
(iv) The function $f$ is uniquely determined by the properties 
\begin{equation*}
\left\{ 
\begin{array}{l}
\Vert x-f\Vert ^{p-2}(x-f)\in A_{p}(\Omega )^{\perp } \\ 
\func{div}f=0\text{ and }\func{curl}f=0\text{ in }\Omega%
\end{array}%
\right. .
\end{equation*}
\end{proposition}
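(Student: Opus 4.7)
The plan is to mirror the four-part structure of Proposition \ref{Bpm}. For (i), I would take a minimizing sequence $(f_{j})$ in $A_{p}(\Omega )$ with $\|x-f_{j}\|_{\mathcal{L}_{p}}\to \lambda _{A_{p}}(\Omega )$. The key observation is that the conditions $\func{div}f_{j}=0$ and $\func{curl}f_{j}=0$ together force each component $f_{j,k}$ to satisfy $\Delta f_{j,k}=0$, so each $f_{j}$ has harmonic components. The mean value property then produces local uniform bounds from the uniform control on $\|f_{j}\|_{\mathcal{L}_{p}}$, and a normal-family argument yields a subsequence converging, together with all derivatives, locally uniformly on $\Omega $ to a smooth vector field $f$ for which $\func{div}f=0$ and $\func{curl}f=0$. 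Fatou's lemma then gives $\|x-f\|_{\mathcal{L}_{p}}\le \lambda _{A_{p}}(\Omega )$, so $f\in A_{p}(\Omega )$ is a minimizer.

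For (ii) and (iii), the arguments are close transcriptions of Proposition \ref{Bpm}(ii)--(iii). Since $f+tg\in A_{p}(\Omega )$ for every $g\in A_{p}(\Omega )$ and $t\in \mathbb{R}$, differentiating $t\mapsto \int_{\Omega }\|x-f-tg\|^{p}\,dm$ at $t=0$ (using dominated convergence when $p=1$, exactly as in Proposition \ref{Bpm}(ii)) yields $\int_{\Omega }\|x-f\|^{p-2}(x-f)\cdot g\,dm=0$. Taking $g=f\in A_{p}(\Omega )$ in this orthogonality relation gives $\int_{\Omega }f_{0}\cdot x\,dm=\int_{\Omega }\|x-f\|^{p}\,dm=\lambda _{A_{p}}(\Omega )^{p}$, and combining this with $\|f_{0}\|_{\mathcal{L}_{q}}=\lambda _{A_{p}}(\Omega )^{p/q}$ produces the formula in (iii).

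For (iv), I would suppose that $f$ and $g$ both satisfy the listed properties. Since $f-g\in A_{p}(\Omega )$, the orthogonality for $f$ applied to $f-g$ yields $\int_{\Omega }\|x-f\|^{p-2}(x-f)\cdot (x-g)\,dm=\|x-f\|_{\mathcal{L}_{p}}^{p}$. A pointwise Cauchy--Schwarz estimate followed by H\"{o}lder's inequality then gives $\|x-f\|_{\mathcal{L}_{p}}\le \|x-g\|_{\mathcal{L}_{p}}$; symmetry produces equality. Chasing equality cases as in Proposition \ref{Bpm}(iv), one finds $x-g=C(x-f)$ with $C\ge 0$ constant. Writing $g=(1-C)x+Cf$ and using $\func{div}g=(1-C)N=0$ forces $C=1$, whence $f=g$.

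The step I expect to be the main obstacle is (i) in the case $p=1$, where $\mathcal{L}_{1}$ is non-reflexive and standard weak compactness is unavailable; it is the automatic harmonicity of the components of $f_{j}$ that lets us substitute a normal-family argument. The $p=1$ case of (iv) is also slightly delicate, because Cauchy--Schwarz equality alone yields only $x-g=c(x)(x-f)$ with $c\ge 0$; the combined $\func{div}$ and $\func{curl}$ constraints on $f$ and $g$ then have to be used (via $\nabla c$ being forced parallel to $x-f$ together with the divergence balance $\nabla c\cdot (x-f)=N(1-c)$) to conclude that $c$ is in fact the constant $1$.
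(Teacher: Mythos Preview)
Your proposal is correct and follows essentially the same route as the paper: the paper also extracts a locally uniformly convergent subsequence via harmonicity of the components (it phrases this through subharmonicity of $\Vert f^{(j)}\Vert$), and then declares parts (ii)--(iv) to be analogous to Proposition~\ref{Bpm}, just as you do. The only cosmetic difference is that in (i) the paper passes the $\mathcal{L}_p$-bound through the limit by a duality pairing with $(C_c^\infty(\Omega))^N$ rather than by Fatou's lemma, and the paper is no more explicit than you are about the $p=1$ case of (iv).
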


\begin{proof}
(i) We choose a sequence $(f^{(j)})$ in $A_{p}(\Omega )$ such that $%
\left\Vert x-f^{(j)}\right\Vert _{\mathcal{L}_{p}}\rightarrow \lambda
_{A_{p}}(\Omega )$. Since $(\left\Vert f^{(j)}\right\Vert _{\mathcal{L}%
_{p}}) $ is bounded and the functions $\left\Vert f^{(j)}\right\Vert $ are
subharmonic (by Theorem 3.4.5 of \cite{AG}), the harmonic co-ordinate
functions $f_{i}^{(j)}$ ($i=1,...,N$) are locally uniformly bounded. Thus,
by taking a subsequence, we can arrange that $(f^{(j)})$ converges locally
uniformly to some function $f$ satisfying $\func{div}f=0$ and $\func{curl}%
f=0 $ on $\Omega $. Since%
\begin{equation*}
\left\vert \int (x-f^{(j)})\cdot \phi ~dm\right\vert \leq \left\Vert
x-f^{(j)}\right\Vert _{\mathcal{L}_{p}}\left\Vert \phi \right\Vert _{%
\mathcal{L}_{q}}\text{ \ \ \ }(\phi \in (C_{c}^{\infty }(\Omega ))^{N}),
\end{equation*}%
we can let $j\rightarrow \infty $ and use the density of $C_{c}^{\infty
}(\Omega )$ in $L^{q}(\Omega )$ to see that $\left\Vert x-f\right\Vert _{%
\mathcal{L}_{p}}\leq \lambda _{A_{p}}(\Omega )$. (When $p=1$ we make the
same adjustments to this argument as in the proof of Proposition \ref{Bpm}%
(i).) The reverse inequality is trivial.

(ii) - (iv) The arguments are analogous to those given for the previous
proposition.
\end{proof}

\section{Proofs of Theorems \protect\ref{main1} and \protect\ref{main2}}

As noted previously, inequality (\ref{LAP}) follows from (\ref{LDeq}) and (%
\ref{Lineq}). In this section we will complete the proofs of Theorem 2
(except where $p=2$) and Theorem 3. In view of (\ref{Lineq}) and Proposition %
\ref{LD}, Theorem \ref{main2} is a consequence of the result below.

\begin{theorem}
\label{LB}If $\Omega \subset \mathbb{R}^{N}$ is a smoothly bounded domain
and $p\in \lbrack 1,2]$, then%
\begin{equation}
\lambda _{B_{p}}(\Omega )\leq Q_{q}(B(r_{\Omega })).  \label{LBineq}
\end{equation}%
Further, equality holds if and only if $\Omega $ is a ball.
\end{theorem}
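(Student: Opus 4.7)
The plan is to exhibit an explicit competitor in the variational characterization (\ref{altLBN}) of $\lambda_{B_p}(\Omega)$ using the Dirichlet torsion function of $\Omega$, and to control the resulting $\mathcal{L}_p$-norm by a chain of classical inequalities applied level-set by level-set.

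Let $w\in C^{1}(\overline{\Omega})\cap C^{\infty}(\Omega)$ be the Dirichlet torsion function of $\Omega$, that is, the unique solution of $-\Delta w=1$ in $\Omega$ with $w=0$ on $\partial\Omega$. Then $u:=-Nw$ satisfies $\Delta u=N$ and $u\in W^{1,p}(\Omega)\cap C^{2}(\Omega)$, so $u$ is admissible in (\ref{altLBN}) and
\[
\lambda_{B_{p}}(\Omega)\leq\|\nabla u\|_{\mathcal{L}_{p}}=N\|\nabla w\|_{\mathcal{L}_{p}(\Omega)}.
\]
On the symmetrized ball, the corresponding torsion function is $v(y)=(r_{\Omega}^{2}-\|y\|^{2})/(2N)$, so $\nabla v(y)=-y/N$, and a direct calculation together with Proposition \ref{Qq}(iv) gives $N^{p}\int_{B(r_{\Omega})}|\nabla v|^{p}\,dm=\int_{B(r_{\Omega})}\|y\|^{p}\,dm=Q_{q}(B(r_{\Omega}))^{p}$. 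The whole proof therefore reduces to the gradient comparison $\int_{\Omega}|\nabla w|^{p}\,dm\leq\int_{B(r_{\Omega})}|\nabla v|^{p}\,dm$.

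To establish this, write $\mu(t):=m(\{w>t\})$ and $P(t):=\mathcal{H}^{N-1}(\{w=t\})$. I shall use three classical facts on each level set: the flux identity $\int_{\{w=t\}}|\nabla w|\,d\mathcal{H}^{N-1}=\mu(t)$ (obtained by integrating $-\Delta w=1$ over $\{w>t\}$); the Cauchy-Schwarz bound $P(t)^{2}\leq\mu(t)(-\mu'(t))$ (using also the coarea identity $-\mu'(t)=\int_{\{w=t\}}|\nabla w|^{-1}d\mathcal{H}^{N-1}$); and the isoperimetric inequality $P(t)\geq Nm(B)^{1/N}\mu(t)^{(N-1)/N}$. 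Combining the coarea formula $\int_{\Omega}|\nabla w|^{p}\,dm=\int_{0}^{\infty}\int_{\{w=t\}}|\nabla w|^{p-1}d\mathcal{H}^{N-1}\,dt$ with H\"older's inequality on $\{w=t\}$ using conjugate exponents $1/(p-1)$ and $1/(2-p)$ (admissible precisely for $p\in[1,2]$) and the flux identity, I obtain $\int_{\{w=t\}}|\nabla w|^{p-1}d\mathcal{H}^{N-1}\leq\mu(t)^{p-1}P(t)^{2-p}$. The change of variable $s=\mu(t)$ then gives
\[
\int_{\Omega}|\nabla w|^{p}\,dm\leq\int_{0}^{m(\Omega)}\frac{s^{p}}{P(s)^{p}}\,ds\leq(Nm(B)^{1/N})^{-p}\int_{0}^{m(\Omega)}s^{p/N}\,ds=\frac{m(B)\,r_{\Omega}^{N+p}}{N^{p-1}(N+p)},
\]
and multiplying by $N^{p}$ yields $Q_{q}(B(r_{\Omega}))^{p}$. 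This is (\ref{LBineq}).

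For the equality case, if $\Omega=B(a,r_{\Omega})$ then $u=-Nw$ agrees with $\|x-a\|^{2}/2$ up to an additive constant; this $u$ is the minimizer in (\ref{altLBN}), because on a ball the condition $\|\nabla u\|^{p-2}\nabla u\in B_{p}(\Omega)^{\perp}$ required by Proposition \ref{Bpm}(iv) follows from spherical symmetry and the mean-value property, so equality holds throughout. Conversely, equality in (\ref{LBineq}) forces equality in each step of the chain above; in particular, equality in the isoperimetric inequality at every $t$ forces each super-level set $\{w>t\}$ to be a ball, whence $\Omega$ itself is a ball. The principal obstacle, and the source of the restriction $p\in[1,2]$, is precisely the H\"older step: its conjugate exponents $1/(p-1)$ and $1/(2-p)$ are admissible only in this range, and extending the method to $p>2$ is the essential difficulty preventing a proof of the full conjectured range $p\in[1,\infty)$.
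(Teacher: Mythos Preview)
Your argument is correct and, at the strategic level, coincides with the paper's: both use the Dirichlet torsion function (scaled by $-N$) as competitor in (\ref{altLBN}) and reduce (\ref{LBineq}) to a Talenti-type gradient comparison between $\Omega$ and $B(r_\Omega)$. The difference is in how that comparison is obtained. The paper simply invokes Theorem~1(v) of Talenti \cite{Tal} for the inequality $\|\nabla u\|_{\mathcal{L}_p(\Omega)}\le\|\nabla w\|_{\mathcal{L}_p(B(r_\Omega))}$ (valid precisely for $p\le 2$) and then cites Kesavan \cite{Ke} for the rigidity statement. You instead reprove Talenti's inequality in this particular case by hand, via the coarea formula, the flux identity $\int_{\{w=t\}}|\nabla w|\,d\mathcal{H}^{N-1}=\mu(t)$, H\"older on level sets with exponents $1/(p-1)$ and $1/(2-p)$, the Cauchy--Schwarz bound $P(t)^2\le\mu(t)(-\mu'(t))$, and the isoperimetric inequality; this is exactly the machinery behind Talenti's theorem, and it makes transparent why $p\le 2$ is needed. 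The paper's route is shorter and leans on well-known references; yours is self-contained and exposes the mechanism. One small point on your equality discussion: from equality in the integrated chain you only get equality in the isoperimetric step for almost every $t$, so concluding that $\Omega$ is a ball requires a brief continuity/limit argument (e.g.\ using $w\in C^1(\overline\Omega)$ and Hopf's lemma to pass from spheres $\{w=t\}$ for small $t$ to $\partial\Omega$); this is standard and is precisely what is packaged in the Kesavan reference the paper uses.
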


\begin{proof}
Let $u$ be the Green potential satisfying $\Delta u=N$ on $\Omega $ and $u=0$
on $\partial \Omega $. Next, let $w(x)=(\Vert x\Vert ^{2}-r_{\Omega }^{2})/2$%
, so that $\Delta w=N$ in $B(r_{\Omega })$ and $w=0$ on $\partial
B(r_{\Omega })$. We make use of a result of Talenti \cite{Tal} concerning
spherical rearrangements. Theorem 1(v) of that paper tells us that, provided 
$p\leq 2$, we have $\Vert \nabla u\Vert _{\mathcal{L}_{p}(\Omega )}\leq
\Vert \nabla w\Vert _{\mathcal{L}_{p}(B(r_{\Omega }))}$. Hence 
\begin{eqnarray}
\lambda _{B_{p}}(\Omega ) &\leq &\Vert \nabla u\Vert _{\mathcal{L}_{p}}\leq
\left\{ \int_{B(r_{\Omega })}\Vert x\Vert ^{p}dm\right\} ^{1/p}  \notag \\
&=&\left\{ \frac{N}{N+p}m(B)r_{\Omega }^{p+N}\right\}
^{1/p}=Q_{q}(B(r_{\Omega })),  \label{x}
\end{eqnarray}%
by (\ref{altLBN}) and then Proposition \ref{Qq}(iv).

Finally, if $\Vert \nabla u\Vert _{\mathcal{L}_{p}(\Omega )}=\Vert \nabla
w\Vert _{\mathcal{L}_{p}(B(r_{\Omega }))}$, {then }Propositions 3.2.1 and
3.2.2 of Kesavan \cite{Ke} tell us that $\Omega $ must be a ball.
\end{proof}

\begin{lemma}
\label{ann}Let $p\in \lbrack 1,\infty )$. If $\Omega $ is either a ball or
an annular region, then 
\begin{equation*}
\lambda _{B_{p}}(\Omega )=\lambda _{A_{p}}(\Omega )=\lambda _{D_{p}}(\Omega
)=Q_{q}(\Omega ).
\end{equation*}
\end{lemma}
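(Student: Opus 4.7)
The plan is to combine (\ref{Lineq}) with Proposition \ref{LD} to obtain the chain
\[ Q_q(\Omega)=\lambda_{D_p}(\Omega)\leq \lambda_{A_p}(\Omega)\leq \lambda_{B_p}(\Omega), \]
so that it suffices to exhibit an explicit $f_0\in B_p(\Omega)$ with $\|x-f_0\|_{\mathcal{L}_p}\leq Q_q(\Omega)$. In both cases I would use the radial ansatz $f_0(x)=\alpha\, x/\|x\|^N$, which is the gradient of a radial harmonic function on $\Omega$ (the zero function when $\Omega$ is a ball, and a multiple of $\|x\|^{2-N}$ respectively $\log\|x\|$ when $\Omega$ is an annulus), and hence lies in $B_p(\Omega)$.

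For $\Omega=B(r_0)$ I take $\alpha=0$, so $f_0\equiv 0$; a direct polar computation gives $\|x\|_{\mathcal{L}_p(B(r_0))}^{\,p}=\tfrac{N}{N+p}\,m(B)\,r_0^{N+p}$, which matches $Q_q(B(r_0))^p$ by Proposition \ref{Qq}(iv).

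For $\Omega=\{a<\|x\|<b\}$, I would write $w_q(x)=W(r)$ and integrate the radial form of (\ref{qlaplace}) once to obtain
\[ r^{N-1}|W'(r)|^{q-2}W'(r)=\frac{(r^*)^N-r^N}{N}, \]
where $r^*\in (a,b)$ is the critical radius at which $W'$ vanishes. Setting $\alpha=(r^*)^N$ and using $1/(q-1)=p-1$, one checks the pointwise identity
\[ \|x-f_0\|^{p-2}(x-f_0)=-N^{p-1}\,\nabla w_q\quad\text{on }\Omega. \]
This is exactly the equality condition for H\"older's inequality applied to (\ref{LDeq}) with $u=w_q$ and $f=f_0$, so it yields $\|x-f_0\|_{\mathcal{L}_p}=Q_q(\Omega)$ and the reverse inequality is complete.

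The case $p=1$ is handled either by the same ansatz with $r^*=(a+b)/2$ (the maximum point of $w_\infty$), where a direct computation gives $(x-f_0)\cdot\nabla w_\infty=-\|x-f_0\|$ almost everywhere, or by the limiting argument $\lambda_{B_1}(\Omega)\leq m(\Omega)^{1-1/p}\lambda_{B_p}(\Omega)\to Q_\infty(\Omega)$ as $p\to 1^+$, together with the uniform convergence $w_q\to w_\infty$ from Theorem 1 of \cite{Kaw}. The main technical point is the pointwise identification of $\|x-f_0\|^{p-2}(x-f_0)$ with a multiple of $\nabla w_q$ on the annulus, since it requires tracking the sign of $W'$ across $r^*$ and matching the exponents via $1/(q-1)=p-1$; once this identity is in hand the remaining steps are immediate.
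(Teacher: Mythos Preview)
Your proposal is correct and follows essentially the same approach as the paper. Both reduce to showing $\lambda_{B_p}(\Omega)\leq Q_q(\Omega)$ and both produce the same test function: the paper argues abstractly that, by spherical symmetry, there exists $v$ with $\nabla v=\|\nabla w_q\|^{q-2}\nabla w_q$ and then sets $w=-Nv$, whereas you write the corresponding $f_0=\alpha\,x/\|x\|^N$ explicitly with $\alpha=(r^*)^N$ and verify the H\"older equality condition; unwinding either computation shows $x-f_0=-N\|\nabla w_q\|^{q-2}\nabla w_q$ in both cases, and for $p=1$ your choice $r^*=(a+b)/2$ reproduces exactly the paper's explicit function $u$.
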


\begin{proof}
In view of (\ref{Lineq}) and Proposition \ref{LD} it is enough to show that $%
\lambda _{B_{p}}(\Omega )\leq Q_{q}(\Omega )$ when $\Omega $ is either a
ball or an annular region. If $\Omega =B(r)$, then (cf. (\ref{x}))%
\begin{equation*}
\lambda _{B_{p}}(B(r))\leq \left\Vert x\right\Vert _{\mathcal{L}%
_{p}}=Q_{q}(B(r)).
\end{equation*}%
Thus it remains to consider the case where $\Omega =B(R)\setminus \overline{%
B(r)}$ and $0<r<R$.

If $p>1$, then it follows from spherical symmetry that there exists $v\in
C^{1}(\overline{\Omega })$ such that $\nabla v=\left\Vert \nabla
w_{q}\right\Vert ^{q-2}\nabla w_{q}$. Writing $w=-Nv$, we see that $\Delta
w=-N\Delta _{q}w_{q}=N$ and so, by (\ref{altLBN}) and Proposition \ref{Qq}%
(iii), 
\begin{equation*}
\lambda _{B_{p}}(\Omega )\leq \left\Vert \nabla w\right\Vert _{\mathcal{L}%
_{p}}=N\left\Vert \nabla v\right\Vert _{\mathcal{L}_{p}}=N\left\Vert \nabla
w_{q}\right\Vert _{\mathcal{L}_{q}}^{q-1}=Q_{q}(\Omega ),
\end{equation*}%
as required.

Now suppose that $p=1$. By Proposition \ref{Qq}(iii) again, 
\begin{eqnarray}
Q_{\infty }(\Omega ) &=&N\int_{\Omega }w_{\infty }(x)dm  \notag \\
&=&N\left( \int_{\left\{ r<\left\Vert x\right\Vert <(R+r)/2\right\}
}(\left\Vert x\right\Vert -r)dm(x)+\int_{\left\{ (R+r)/2<\left\Vert
x\right\Vert <R\right\} }(R-\left\Vert x\right\Vert )dm(x)\right)  \notag \\
&=&\frac{Nm(B)}{N+1}\left( R^{N+1}+r^{N+1}-\frac{\left( R+r\right) ^{N+1}}{%
2^{N}}\right) .  \label{Qinf}
\end{eqnarray}%
If we define 
\begin{equation*}
u(x)=\left\{ 
\begin{array}{cc}
\dfrac{\Vert x\Vert ^{2}}{2}+\dfrac{1}{N-2}\left( \dfrac{R+r}{2}\right)
^{N}\Vert x\Vert ^{2-N} & (N\geq 3) \\ 
\dfrac{\Vert x\Vert ^{2}}{2}-\left( \dfrac{R+r}{2}\right) ^{2}\log \Vert
x\Vert & (N=2)%
\end{array}%
\right. ,
\end{equation*}%
then 
\begin{eqnarray*}
\lambda _{B_{1}}(\Omega ) &\leq &\Vert \nabla u\Vert _{\mathcal{L}%
_{1}}=\int_{\Omega }\left\vert \Vert x\Vert -\left( \frac{R+r}{2}\right)
^{N}\Vert x\Vert ^{1-N}\right\vert dm \\
&=&\int_{\left\{ r<\left\Vert x\right\Vert <(R+r)/2\right\} }\left( \left( 
\frac{R+r}{2}\right) ^{N}\Vert x\Vert ^{1-N}-\Vert x\Vert \right) dm \\
&&+\int_{\left\{ (R+r)/2<\left\Vert x\right\Vert <R\right\} }\left( \Vert
x\Vert -\left( \frac{R+r}{2}\right) ^{N}\Vert x\Vert ^{1-N}\right) dm \\
&=&\frac{Nm(B)}{N+1}\left( R^{N+1}+r^{N+1}-\frac{\left( R+r\right) ^{N+1}}{%
2^{N}}\right) =Q_{\infty }(\Omega ),
\end{eqnarray*}%
by (\ref{Qinf}).
\end{proof}

\bigskip

\begin{proposition}
\label{conserv}Let $p\in \lbrack 1,\infty )$. If there exists $f\in
A_{p}(\Omega )$ satisfying $\left\Vert x-f\right\Vert _{\mathcal{L}%
_{p}}=Q_{q}(\Omega )$, then $f\in B_{p}(\Omega )$.
\end{proposition}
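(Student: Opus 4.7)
The plan is to use the hypothesis to pin down $f$ explicitly as the $D_p(\Omega)$-extremal, and then to exploit the curl-free structure inherent in $A_p(\Omega)$ to produce a single-valued harmonic primitive.

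First, since $A_p(\Omega)\subset D_p(\Omega)$ and $\lambda_{D_p}(\Omega)=Q_q(\Omega)$ by Proposition \ref{LD}, the hypothesis $\|x-f\|_{\mathcal{L}_p}=Q_q(\Omega)$ says that $f$ attains the $D_p$-minimum. For $p>1$, strict convexity of $\|\cdot\|_{\mathcal{L}_p}^{p}$ on the affine set $x+D_p(\Omega)$ forces uniqueness, so $f$ must equal the extremal $f_0$ of Proposition \ref{Dp}. Combining Proposition \ref{Dp}(iii)--(iv) and inverting the $p$-power via $p-1=1/(q-1)$ gives
\begin{equation*}
f \;=\; x + c^{q-1}\,\|\nabla w_q\|^{q-2}\nabla w_q
\end{equation*}
for some $c>0$, and the divergence-free constraint on $f$, together with $\operatorname{div}(\|\nabla w_q\|^{q-2}\nabla w_q)=\Delta_q w_q=-1$, pins down $c^{q-1}=N$.

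Next, set $G:=\|\nabla w_q\|^{q-2}\nabla w_q=(f-x)/N$. Since $f$ is a harmonic vector field and $x=\nabla(\|x\|^{2}/2)$ is itself curl-free, $G$ is a $C^{1}$ (using $w_q\in C^{1}(\overline\Omega)$ together with the regularity of $f$) curl-free vector field with $\operatorname{div} G=-1$. Writing $G=\phi\,\nabla w_q$ with $\phi=\|\nabla w_q\|^{q-2}$, the curl-free condition translates to $\nabla\phi$ being parallel to $\nabla w_q$ everywhere, so that $\phi$ is constant on each connected component of each level set of $w_q$; this immediately yields a local primitive of the form $V=H(w_q)$, where $H$ is an antiderivative of $\phi$ regarded as a function of $w_q$.

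The main technical point is to promote this local primitive to a single-valued function $V\in C^{2}(\Omega)$ with $\nabla V=G$ on all of $\Omega$, equivalently to show that $\oint_\gamma G\cdot dr=0$ for every smooth loop $\gamma\subset\Omega$. This is automatic on simply connected $\Omega$; in general it must be extracted from the combination of the curl-free condition, the specific manner in which $\phi$ depends on $w_q$ along integral curves of $\nabla w_q$, and the continuity of $f$ up to the boundary. This is where the bulk of the work lies.

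Once $V$ is obtained, set $h:=\|x\|^{2}/2 + NV$. Then $\nabla h = f$ and
\begin{equation*}
\Delta h \;=\; N + N\operatorname{div} G \;=\; N + N(-1) \;=\; 0,
\end{equation*}
so $h$ is harmonic. Integrability $h\in W^{1,p}(\Omega)$ follows from $w_q\in C^{1}(\overline\Omega)$ and $\|\nabla w_q\|^{q-1}\in L^{p}(\Omega)$ (since $(q-1)p=q$), and $h\in C^{2}(\Omega)$ is immediate from interior harmonic regularity; hence $f=\nabla h\in B_p(\Omega)$. The case $p=1$ is handled analogously, with $w_q$ replaced by the distance function $w_\infty(x)=\operatorname{dist}(x,\partial\Omega)$ and using the equality case of the H\"older-type inequality established in the proof of Proposition \ref{LD}.
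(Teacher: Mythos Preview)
Your reduction to showing that the curl-free field $G=(f-x)/N$ has a global primitive is correct, and your identification $x-f=c\|\nabla w_q\|^{q-2}\nabla w_q$ via the equality case of H\"older (equivalently, via uniqueness in $D_p(\Omega)$) matches the paper exactly. The gap is that you explicitly flag the crucial step --- promoting the local primitive to a single-valued one on a possibly multiply connected $\Omega$ --- as ``where the bulk of the work lies'' and then do not do it. Your suggested mechanism (showing $\phi=\|\nabla w_q\|^{q-2}$ is constant on level components of $w_q$, hence $V=H(w_q)$ locally) does not by itself yield a global primitive: for small $\varepsilon$ the level set $\{w_q=\varepsilon\}$ can have several components (already for an annulus there are two), and nothing in the curl-free condition forces $\phi$ to take the \emph{same} value on different components of the same level set, so the local functions $H$ need not patch. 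There is also a regularity issue: $w_q$ is in general only $C^{1,\alpha}$, so the manipulation ``$\operatorname{curl}(\phi\nabla w_q)=0\Rightarrow\nabla\phi\parallel\nabla w_q$'' is not justified classically.

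The paper circumvents all of this by a different device: it uses the fact that $x-f$ extends continuously to $\overline{\Omega}$ and is normal to $\partial\Omega$ to build a $C^1$ extension of $f$ to $\mathbb{R}^N$ of the form $x-\nabla v$ outside $\Omega$, and then shows (via a local Green's theorem argument across $\partial\Omega$) that this extended $f$ is curl-free on $\mathbb{R}^N$ in the sense of distributions. Mollifying with a radial kernel gives smooth curl-free fields on the simply connected set $\mathbb{R}^N$ that agree with $f$ on slightly shrunken copies of $\Omega$ (because the components of $f$ are harmonic), from which path-independence of line integrals of $f$ in $\Omega$ follows. This extension-to-$\mathbb{R}^N$ trick is precisely the missing idea in your argument; once you have it, the conclusion $f=\nabla h\in B_p(\Omega)$ is immediate. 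For $p=1$ the paper argues on $\Omega_\eta=\{w_\infty>\eta\}$ rather than directly on $\Omega$, which you should also incorporate.
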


\begin{proof}
First suppose that $p>1$, so that $q<\infty $. By (\ref{Lineq}) and
Proposition \ref{LD}, 
\begin{equation*}
\left\Vert x-f\right\Vert _{\mathcal{L}_{p}}=\lambda _{D_{p}}(\Omega )=\frac{%
-1}{\left\Vert \nabla w_{q}\right\Vert _{\mathcal{L}_{q}}}\int_{\Omega
}(x-f)\cdot \nabla w_{q}~dm.
\end{equation*}%
Since 
\begin{equation}
-\int_{\Omega }(x-f)\cdot \nabla w_{q}~dm=\left\Vert x-f\right\Vert _{%
\mathcal{L}_{p}}\left\Vert \nabla w_{q}\right\Vert _{\mathcal{L}_{q}},
\label{eq}
\end{equation}%
the equality case of H\"{o}lder's inequality implies that $x-f=c\left\Vert
\nabla w_{q}\right\Vert ^{q-2}\nabla w_{q}$ on $\Omega $ for some constant $%
c $. Hence $x-f$ has a continuous extension to $\overline{\Omega }$, and on $%
\partial \Omega $ it is normal to $\partial \Omega $. We now choose a
function $v\in C^{1}(\mathbb{R}^{N}\backslash \Omega )$ such that $v=0$ and $%
\nabla v=x-f$ on $\partial \Omega $. (Such a function exists by \cite{GG},
for example.) Thus we obtain a continuous extension of $f$ to $\mathbb{R}%
^{N} $ by defining it to be $x-\nabla v$ on $\mathbb{R}^{N}\backslash \Omega 
$.

We claim that this extended function, which we also denote by $f$, is
curl-free in the sense of distributions. By using a partition of unity it is
enough to show that, for some $\delta >0$, 
\begin{equation}
\int \left( f_{i}\frac{\partial \phi }{\partial x_{j}}-f_{j}\frac{\partial
\phi }{\partial x_{i}}\right) dm=0\text{ \ \ \ }(i\neq j)  \label{cur}
\end{equation}%
whenever $\phi \in C^{\infty }(\mathbb{R}^{N})$ and $\mathrm{diam}(\mathrm{%
supp}(\phi ))<\delta $. This equation trivially holds when $\mathrm{supp}%
(\phi )\cap \partial \Omega =\emptyset $, so it is enough to consider the
case where 
\begin{equation*}
\mathrm{supp}(\phi )\subset K:=\underset{i=1}{\overset{N}{\times }}%
(y_{i}-r,y_{i}+r)
\end{equation*}%
for some $y\in \partial \Omega $ and $r>0$.

Without loss of generality we may assume that 
\begin{equation*}
K\cap \partial \Omega =\left\{ \left(
x_{1},...,x_{N-1},g(x_{1},...,x_{N-1})\right) :x_{i}\in (y_{i}-r,y_{i}+r)%
\text{ whenever }i<N\right\}
\end{equation*}%
for some smooth function $g$. If $i<N$ and the co-ordinates $x_{j}$ ($j\neq
i,N$) are fixed, then \ \ \ 
\begin{eqnarray*}
&&\int_{y_{N}-r}^{y_{N}+r}\int_{y_{i}-r}^{y_{i}+r}\left( f_{i}\frac{\partial
\phi }{\partial x_{N}}-f_{N}\frac{\partial \phi }{\partial x_{i}}\right)
dx_{i}dx_{N} \\
&=&\left( \int_{D_{1}}+\int_{D_{2}}\right) \left( f_{i}\frac{\partial \phi }{%
\partial x_{N}}-f_{N}\frac{\partial \phi }{\partial x_{i}}\right)
dA(x_{i},x_{N}),
\end{eqnarray*}%
where $D_{1},D_{2}$ are the components of $\{(x_{i},x_{N}):(x_{1},...,x_{N})%
\in K\backslash \partial \Omega \}$ and $A$ denotes two-dimensional measure.
Two applications of Green's theorem, together with the fact that $\partial
f_{i}/\partial x_{N}=\partial f_{N}/\partial x_{i}$ on $\mathbb{R}%
^{N}\backslash \partial \Omega $, show that this latter integral expression
reduces to self-cancelling terms along the common boundary curve of $%
D_{1},D_{2}$. Hence (\ref{cur}) holds when $j=N$. If $j\neq N$, we apply a
small rotation in the $(x_{j},x_{N})$-plane to see similarly that%
\begin{equation*}
\int \left\{ f_{i}\left( \frac{\partial \phi }{\partial x_{N}}\cos \theta +%
\frac{\partial \phi }{\partial x_{j}}\sin \theta \right) -\left( f_{N}\cos
\theta +f_{j}\sin \theta \right) \frac{\partial \phi }{\partial x_{i}}%
\right\} dm=0,
\end{equation*}%
whence (\ref{cur}) again follows.

We now use a rotationally invariant smoothing kernel $\psi _{\varepsilon }$
supported by a ball of radius $\varepsilon $ to obtain a mollification $%
f^{\varepsilon }$ of $f$, which is also curl-free since 
\begin{equation*}
\frac{\partial }{\partial x_{j}}\psi _{\varepsilon }(x-y)=-\frac{\partial }{%
\partial y_{j}}\psi _{\varepsilon }(x-y).
\end{equation*}%
Further, since each component $f_{i}$ of $f$ is harmonic in $\Omega $, the
functions $f^{\varepsilon }$ and $f$ are equal on the set $\{x:\mathrm{dist}%
(x,\mathbb{R}^{N}\backslash \Omega )>\varepsilon \}$. Hence line integrals
of $f$ in $\Omega $ are path independent, so $f$ is of the form $\nabla v$,
where $\Delta v=\nabla \cdot f=0$, and thus $f\in B_{p}(\Omega )$.

Finally, if $p=1$, then (\ref{eq}) still holds, and now shows that $%
x-f=-\left\Vert x-f\right\Vert \nabla w_{\infty }$ on $\Omega $. We can thus
apply the above argument to $\Omega _{\eta }=\{x:\mathrm{dist}(x,\mathbb{R}%
^{N}\backslash \Omega )>\eta \}$ to deduce that $f\in B_{1}(\Omega _{\eta })$
for arbitrarily small $\eta >0$, and so $f\in B_{1}(\Omega )$.
\end{proof}

\bigskip

We now consider the overdetermined problem%
\begin{equation}
\left\{ 
\begin{array}{ll}
\Delta v=1 & \text{ in }\Omega \\ 
v=c_{i}\text{ \ and \ }\dfrac{\partial v}{\partial n}=a_{i} & \text{ on }%
\Gamma _{i}%
\end{array}%
\right. ,  \label{od}
\end{equation}%
where $n$ denotes the exterior unit normal, $a_{i},c_{i}\in \mathbb{R}$ $%
(i=0,...,j)$ and $\{\Gamma _{i}\}$ are the components of $\partial \Omega $.
(We use $\Gamma _{0}$ for the outer boundary component.) The following
theorem, which generalizes earlier work of Serrin \cite{Ser}, is contained
in Theorem 2 of Sirakov \cite{Sir}.

\begin{theorem}
\label{KS}Let $c_{0}=0$, $a_{0}\geq 0$, and $c_{i}<0$, $a_{i}\leq 0$ $%
(i=1,...,j)$. Then there exists $v\in C^{2}(\overline{\Omega })$ satisfying (%
\ref{od}) if and only if $\Omega $ is a ball or an annular region. In either
case, $v$ is a radial function.
\end{theorem}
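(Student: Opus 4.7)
The plan is to prove the sufficient direction by explicit construction of radial solutions, and to establish the converse by the Alexandrov-Serrin moving planes method, adapted to multiply connected domains.

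For sufficiency, on $\Omega = B(R)$ the function $v(x) = (\|x\|^{2} - R^{2})/(2N)$ solves (\ref{od}) with $c_{0} = 0$ and $a_{0} = R/N > 0$. On an annulus $\Omega = B(R) \setminus \overline{B(r)}$ I would write $v(x) = V(\|x\|)$ and solve the radial ODE $V''(s) + (N-1)s^{-1}V'(s) = 1$ with $V(R) = 0$ and $V(r) = c_{1} < 0$; the two integration constants are uniquely determined, and a direct computation verifies that the resulting Neumann data $a_{0} = V'(R)$ and $a_{1} = -V'(r)$ satisfy the required sign conditions.

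For necessity, I would apply the moving planes method. Fix a unit vector $\nu$, set $\Pi_{t} = \{x : x \cdot \nu = t\}$, $\Sigma_{t} = \{x \in \Omega : x \cdot \nu > t\}$, and let $\Sigma_{t}^{*}$ denote its reflection through $\Pi_{t}$. Slide $t$ from $+\infty$ until the first critical value $t_{*}$ at which either $\Sigma_{t_{*}}^{*}$ is internally tangent to $\partial \Omega$ at a non-axial point, or $\Pi_{t_{*}}$ becomes orthogonal to the outer boundary $\Gamma_{0}$. On $\Sigma_{t}^{*}$ consider $w_{t}(x) = v(R_{t}x) - v(x)$, where $R_{t}$ is the reflection across $\Pi_{t}$; then $\Delta w_{t} = 0$, and because the prescribed data on $\Gamma_{0}$ are constant, the strong maximum principle together with Hopf's lemma (and Serrin's corner lemma at tangent contacts) forces $w_{t_{*}} \equiv 0$ on $\Sigma_{t_{*}}^{*}$. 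Hence $\Omega$ is symmetric about $\Pi_{t_{*}}$. Running this for every direction $\nu$ shows $\Omega$ is symmetric about every hyperplane through a common point, which combined with smoothness of $\partial\Omega$ forces $\Omega$ to be either a ball or a concentric annulus, with $v$ radial.

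The main obstacle lies in handling the inner boundary components $\Gamma_{i}$ ($i \geq 1$), which are absent in Serrin's original single-boundary setting. Here the sign hypotheses $c_{i} < 0$ and $a_{i} \leq 0$ play an essential role: they guarantee that when $\Sigma_{t}^{*}$ first touches an inner component $\Gamma_{i}$ as $t$ decreases, the comparison $w_{t} \geq 0$ survives across that contact, since the Dirichlet values push $v$ strictly below zero on $\Gamma_{i}$ and the Neumann sign prevents the reflected gradient from dominating. Without such signs one can produce non-radial configurations with non-spherical inner holes, so the argument requires a careful case analysis at each inner tangency, applying Hopf and corner-lemma variants to each component $\Gamma_{i}$ separately before iterating over all directions $\nu$ to pin $\Omega$ down to a concentric annulus.
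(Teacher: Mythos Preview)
The paper does not prove this statement; it is quoted as Theorem~2 of Sirakov~\cite{Sir}, which extends Serrin's result~\cite{Ser} to domains with holes. So there is no ``paper's own proof'' to compare against beyond the citation, and your proposal goes further than the paper itself does.

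Your strategy is the right one and is indeed what Sirakov carries out: explicit radial construction for sufficiency, and the moving-planes method for necessity. At the level of an outline the sketch is sound, but two places would need substantially more work in a full proof. First, the treatment of the inner boundaries is the heart of the matter and is more intricate than your final paragraph indicates. As the plane moves, several distinct configurations arise: the reflected outer cap may enter a hole $G_i$, the reflection of an inner component $\Gamma_i$ may land inside $\Omega$, or reflected inner components may interact with one another. In each case one must verify that the comparison function keeps the correct sign on the relevant portion of the boundary of the reflected region, and this is where the hypotheses $c_i<0$, $a_i\le 0$ (together with the consequence $v<0$ in $\Omega$ from the maximum principle) are used in separate ways; a single invocation of Hopf's lemma does not cover all of them. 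Second, the step ``symmetry about some hyperplane in every direction $\nu$ $\Rightarrow$ all hyperplanes pass through a common centre'' is standard but not automatic and should be justified (e.g.\ via the continuity of the critical position $t_*(\nu)$, or by a connectedness argument on the set of admissible symmetry hyperplanes).

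A minor point on the sufficiency side: for the annulus you prescribe the two Dirichlet values and then assert that the resulting Neumann data have the correct signs. This is true but deserves a line of computation; the signs of $V'(R)$ and $-V'(r)$ are not immediate for all admissible $c_1<0$ without writing out the radial solution and checking.
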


The case $p\neq 2$ of the equality statement in Theorem \ref{main1} is
established in the next result. The case where $p=2$ will be addressed in
Section \ref{S4}.

\begin{theorem}
Let $p\in \lbrack 1,\infty )$, where $p\neq 2$. Then $\lambda
_{A_{p}}(\Omega )=Q_{q}(\Omega )$ if and only if $\Omega $ is either a ball
or an annular region.
\end{theorem}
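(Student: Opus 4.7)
The ``if'' direction follows from Lemma~\ref{ann}, so we focus on the forward implication. Assume $\lambda_{A_p}(\Omega) = Q_q(\Omega)$. The strategy is to construct from the extremal a potential $W$ solving the overdetermined problem~(\ref{od}), and then invoke Theorem~\ref{KS}. By Proposition~\ref{Apm}(i) there is an extremal $f \in A_p(\Omega)$ with $\|x-f\|_{\mathcal{L}_p} = \lambda_{A_p}(\Omega)$, and Proposition~\ref{conserv} shows in fact $f \in B_p(\Omega)$, so $f = \nabla h$ for some harmonic $h$. For $p \in (1,\infty)$, Propositions~\ref{LD} and~\ref{Apm} combine to yield
$$-\int_{\Omega}(x-f)\cdot \nabla w_q~dm \;=\; \|x-f\|_{\mathcal{L}_p}\,\|\nabla w_q\|_{\mathcal{L}_q},$$
which is the equality case of H\"{o}lder's inequality. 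Consequently $x - f = c\,\|\nabla w_q\|^{q-2}\nabla w_q$ in $\Omega$, for some constant $c < 0$. Set $W := (\|x\|^2/2 - h)/N$, so $\Delta W = 1$ in $\Omega$ and $\nabla W = (c/N)\|\nabla w_q\|^{q-2}\nabla w_q$; in particular $\nabla W$ is parallel to $\nabla w_q$ throughout $\Omega$.

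Since $w_q = 0$ on $\partial\Omega$, both $\nabla w_q$ and $\nabla W$ are normal to each component $\Gamma_i$ of $\partial\Omega$; the vanishing tangential derivative forces $W$ to be constant on $\Gamma_i$. By Hopf's lemma $\nabla w_q \neq 0$ in a tubular neighbourhood $\mathcal{U}_i$ of $\Gamma_i$, and parallelism of the gradients gives $W = F(w_q)$ on $\mathcal{U}_i$ for some smooth $F$. Differentiating produces $F'(w_q) = (c/N)\|\nabla w_q\|^{q-2}$, and since $q \neq 2$ this determines $\|\nabla w_q\|$ as a smooth function of $w_q$ alone on $\mathcal{U}_i$; restriction to $\{w_q = 0\}$ shows $\|\nabla w_q\|$, and hence $\partial W/\partial n$, is constant on $\Gamma_i$. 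Shifting $h$ by an additive constant to ensure $W = 0$ on the outer component $\Gamma_0$, the overdetermined problem~(\ref{od}) is satisfied by $W$, and Theorem~\ref{KS} forces $\Omega$ to be a ball or an annular region.

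The case $p = 1$ is essentially the same: the identity $x - f = -\|x-f\|\nabla w_\infty$ from the proof of Proposition~\ref{conserv} gives $\nabla W = -(\|x-f\|/N)\nabla w_\infty$, and in a tubular neighbourhood of each $\Gamma_i$ (where $w_\infty$ is smooth and $\|\nabla w_\infty\| = 1$), the same parallelism argument shows $W = F(w_\infty)$, so that $\|x-f\| = -NF'(w_\infty)$ and thus $\|\nabla W\|$ is constant on each $\Gamma_i$. The main technical obstacle is verifying that the boundary data of $W$ satisfy the sign conventions demanded by Theorem~\ref{KS}; these are controlled by $c < 0$ (or the corresponding sign for $p = 1$) together with the direction of $\nabla w_q$ (resp.\ $\nabla w_\infty$) on $\partial\Omega$ supplied by Hopf's lemma.
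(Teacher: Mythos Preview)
Your approach follows the same broad outline as the paper's proof (use Propositions~\ref{Apm} and~\ref{conserv}, exploit equality in H\"older to align $x-f$ with $\nabla w_q$, then feed an overdetermined problem into Theorem~\ref{KS}), but there is a genuine gap at the final step.

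You propose to apply Theorem~\ref{KS} directly to $W$ on all of $\Omega$, and you say the required sign conditions follow from $c<0$ together with Hopf's lemma. In fact they do not. Hopf's lemma gives $\partial w_q/\partial n<0$ on \emph{every} component of $\partial\Omega$ (the outward normal of $\Omega$ always points away from $\Omega$, whether on $\Gamma_0$ or on a hole $\Gamma_i$). Since $\nabla W=(c/N)\|\nabla w_q\|^{q-2}\nabla w_q$ with $c<0$, this yields
\[
\frac{\partial W}{\partial n}=\frac{c}{N}\|\nabla w_q\|^{q-2}\frac{\partial w_q}{\partial n}>0
\]
on \emph{all} of $\partial\Omega$. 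Thus $a_i>0$ on the inner components $\Gamma_i$, directly violating the hypothesis $a_i\le 0$ of Theorem~\ref{KS}. (The condition $c_i<0$ is also not evident: subharmonicity of $W$ does not tell you on which boundary component the maximum is attained.) So Theorem~\ref{KS} cannot be invoked on $\Omega$ as written.

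The paper avoids this by applying Theorem~\ref{KS} not to $\Omega$ but to a thin collar $\Omega_\varepsilon$, the component of $\{0<w_q<\varepsilon\}$ bounded by $\Gamma_0$. On the \emph{inner} boundary of this collar (the level set $\{w_q=\varepsilon\}$) the outward normal of $\Omega_\varepsilon$ points in the direction of \emph{increasing} $w_q$, so the signs of both the Dirichlet and Neumann data are reversed there, and the hypotheses of Theorem~\ref{KS} are met. The conclusion is that $\Gamma_0$ is a sphere and $v$ is radial in $\Omega_\varepsilon$; then real-analyticity of $v$ (since $\Delta v=N$) propagates the radial structure to all of $\Omega$, forcing the remaining boundary components to be concentric spheres. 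This collar-plus-analyticity manoeuvre is the missing idea in your argument.
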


\begin{proof}
For the \textquotedblleft if\textquotedblright\ part we refer to Lemma \ref%
{ann}. For the \textquotedblleft only if\textquotedblright\ part it is
enough, given Propositions \ref{Apm}\ and \ref{conserv}, to show that, if
there exists $v\in W^{1,p}(\Omega )$ such that $\Delta v=Nm$ and $\left\Vert
\nabla v\right\Vert _{\mathcal{L}_{p}}=Q_{q}(\Omega )$, then $\Omega $ is
either a ball or an annular region.

If $p>1$, then we see from Proposition \ref{Qq} that 
\begin{equation*}
Q_{q}(\Omega )=\frac{N\int_{\Omega }w_{q}~dm}{\left\Vert \nabla
w_{q}\right\Vert _{\mathcal{L}_{q}}}=\frac{\int_{\Omega }w_{q}\Delta v~dm}{%
\left\Vert \nabla w_{q}\right\Vert _{\mathcal{L}_{q}}}=-\frac{\int_{\Omega
}\nabla w_{q}\cdot \nabla v~dm}{\left\Vert \nabla w_{q}\right\Vert _{%
\mathcal{L}_{q}}},
\end{equation*}%
where the last equality can be justified using the facts that $w_{q}\in
W_{0}^{1,q}(\Omega )$ and that $C_{c}^{\infty }(\Omega )$ is dense in $%
W_{0}^{1,q}(\Omega )$\textbf{.} By H\"{o}lder's inequality, 
\begin{equation*}
\left\vert \int_{\Omega }\nabla w_{q}\cdot \nabla v~dm\right\vert \leq
\left\Vert \nabla v\right\Vert _{\mathcal{L}_{p}}\left\Vert \nabla
w_{q}\right\Vert _{\mathcal{L}_{q}},
\end{equation*}%
where equality occurs if and only if $\nabla w_{q},\nabla v$ are always
parallel, $\nabla w_{q}\cdot \nabla v$ does not change sign, and $\left\Vert
\nabla w_{q}\right\Vert ^{q}=c\left\Vert \nabla v\right\Vert ^{p}$ in $%
\Omega $ for some constant $c>0$. Further, $\nabla w_{q}\neq 0$ on $\partial
\Omega $ by Hopf's lemma (see Theorem 5.5.1 of \cite{PS}). Thus the equality 
$Q_{q}(\Omega )=\left\Vert \nabla v\right\Vert _{\mathcal{L}_{p}}$ implies
that each component of any level surface of $v$ is also a component of a
level surface of $w_{q}$. Hence, for each component $\Gamma _{i}$ of $%
\partial \Omega $, there is a function $g_{i}$ such that $v=g_{i}\circ w_{q}$
near $\Gamma _{i}$. Further, $\nabla v=g_{i}^{\prime }(w_{q})\nabla w_{q}$,
so 
\begin{equation*}
\left\Vert \nabla v\right\Vert =\left\vert g_{i}^{\prime }(w_{q})\right\vert
\left\Vert \nabla w_{q}\right\Vert =c^{1/q}\left\vert g_{i}^{\prime
}(w_{q})\right\vert \left\Vert \nabla v\right\Vert ^{p/q}.
\end{equation*}%
Since $p\neq 2$, we have $p\neq q$ and so $\left\Vert \nabla v\right\Vert
=c^{1/(q-p)}\left\vert g_{i}^{\prime }(w_{q})\right\vert ^{q/(q-p)}$. Thus $%
\left\Vert \nabla v\right\Vert $ is constant on each component of a level
surface of $w_{q}$ (which is also a level surface of $v$).

Since $\nabla w_{q}\cdot n<0$ on $\partial \Omega $ and $\nabla w_{q}\cdot
\nabla v$ does not change sign, we can apply the divergence theorem to $%
\nabla v$ to see that $\nabla w_{q}\cdot \nabla v<0$ near $\partial \Omega $
and hence $\nabla v\cdot n>0$ on $\Gamma _{0}$. Now let $\varepsilon >0$ be
small and let $\Omega _{\varepsilon }$ be the component of $%
\{0<w_{q}<\varepsilon \}$ which has $\Gamma _{0}$ as a boundary component.
Since $w_{q}\in C^{1}(\overline{\Omega })$, $\Vert \nabla w_{q}\Vert
^{q}=c\Vert \nabla v\Vert ^{p}$ and $\Vert \nabla v\Vert
=c^{1/(q-p)}|g_{0}^{\prime }(w_{q})|^{q/(q-p)}$, it follows that $\left\vert
g_{0}^{\prime }(0)\right\vert <\infty $ and certainly $\left\vert
g_{0}(0)\right\vert <\infty $. Thus $v$ has a (finite) constant value on
each component of $\partial \Omega _{\varepsilon }$. Since $\Delta v=Nm$ on $%
\Omega $ we conclude that $v\in C^{2}(\overline{\Omega }_{\varepsilon })$
(see Theorem 6.14 of \cite{GT}). Thus we can apply Theorem \ref{KS} to $%
(v-g_{0}(0))/N$ on $\Omega _{\varepsilon }$ to see that $\Gamma _{0}$ is a
sphere and $v$ is a radial function. By the analyticity of $v$, any other
boundary of component of $\Omega $ must be a concentric sphere. Thus $\Omega 
$ is either a ball or an annular region.

The argument for the case $p=1$ is mostly similar. Since $\left\Vert \nabla
w_{\infty }\right\Vert =1$, we have 
\begin{equation*}
Q_{\infty }(\Omega )=N\int_{\Omega }w_{\infty }~dm=-\int_{\Omega }\nabla
w_{\infty }\cdot \nabla v~dm
\end{equation*}%
and 
\begin{equation*}
\left\vert \int_{\Omega }\nabla w_{\infty }\cdot \nabla v~dm\right\vert \leq
\left\Vert \nabla v\right\Vert _{\mathcal{L}_{1}}.
\end{equation*}%
The equality $Q_{\infty }(\Omega )=\left\Vert \nabla v\right\Vert _{\mathcal{%
L}_{1}}$ implies that $\nabla w_{\infty }$ and $\nabla v$ are parallel.
Thus, for each component $\Gamma _{i}$ of $\partial \Omega $, there is a
function $g_{i}$ such that $v=g_{i}\circ w_{\infty }$ near $\Gamma _{i}$. We
no longer claim that this equation holds on $\partial \Omega $. However, as
in the proof of Proposition \ref{conserv}, we can work instead with $\Omega
_{\eta }=\{x:\mathrm{dist}(x,\mathbb{R}^{N}\backslash \Omega )>\eta \}$ for
small $\eta >0$ and now argue as before to conclude that $\Omega $ is either
a ball or an annular region.
\end{proof}

\section{The case where $p=2$\label{S4}}

It follows from Proposition \ref{Bpm}(i) that there exist harmonic functions 
$h$ satisfying $\lambda _{B_{2}}(\Omega )=\Vert x-\nabla h\Vert _{\mathcal{L}%
_{2}}$. We will now identify all such functions. (This was already done in 
\cite{FK} in the case of planar domains.)

\begin{theorem}
\label{main2'} The harmonic functions $h\in W^{1,2}({\Omega })$ which
satisfy $\lambda _{B_{2}}(\Omega )=\Vert x-\nabla h\Vert _{\mathcal{L}_{2}}$
are precisely the functions of the form $H_{\Vert x\Vert ^{2}/2}^{\Omega }+c%
\mathrm{,}$ where $H_{g}^{\Omega }$ is the solution to the Dirichlet problem
on $\Omega $ with boundary data $g$, and $c\in \mathbb{R}$.
\end{theorem}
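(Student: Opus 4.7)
The plan is to substitute $u = \|x\|^{2}/2 - h$ and invoke the uniqueness statement in Proposition \ref{Bpm}(iv). This substitution is a bijection between the harmonic functions $h \in W^{1,2}(\Omega)\cap C^{2}(\Omega)$ and the functions $u \in W^{1,2}(\Omega)\cap C^{2}(\Omega)$ satisfying $\Delta u = N$, with $\nabla u = x - \nabla h$. Hence, via (\ref{altLBN}), minimising $\|x-\nabla h\|_{\mathcal{L}_{2}}$ over harmonic $h$ is equivalent to minimising $\|\nabla u\|_{\mathcal{L}_{2}}$ over $u$ with $\Delta u = Nm$, a problem for which Proposition \ref{Bpm}(iv) (applied with $p=2$) asserts uniqueness up to an additive constant and provides the characterisation (\ref{uchar}), namely $\nabla u \in B_{2}(\Omega)^{\perp}$ together with $\Delta u = Nm$.

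The natural candidate is $u_{0} := \|x\|^{2}/2 - H^{\Omega}_{\|x\|^{2}/2}$. Clearly $\Delta u_{0} = N$ in $\Omega$, and $u_{0}$ vanishes on $\partial\Omega$ by the definition of the Dirichlet solution, so $u_{0}\in W^{1,2}_{0}(\Omega)$. To verify the orthogonality condition, let $h \in W^{1,2}(\Omega)\cap C^{2}(\Omega)$ be harmonic and choose $\phi_{k}\in C_{c}^{\infty}(\Omega)$ with $\phi_{k}\to u_{0}$ in $W^{1,2}(\Omega)$. Since each $\phi_{k}$ has compact support in $\Omega$ and $\Delta h = 0$ there, integration by parts yields
$$
\int_{\Omega} \nabla \phi_{k} \cdot \nabla h \, dm = -\int_{\Omega} \phi_{k}\,\Delta h \, dm = 0,
$$
and passing to the limit gives $\int_{\Omega}\nabla u_{0}\cdot \nabla h\, dm = 0$, so $\nabla u_{0}\in B_{2}(\Omega)^{\perp}$.

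By Proposition \ref{Bpm}(iv), this $u_{0}$ is therefore the unique minimiser of $\|\nabla u\|_{\mathcal{L}_{2}}$ among $u\in W^{1,2}(\Omega)$ with $\Delta u = Nm$, up to an additive constant. Reversing the substitution, the extremal harmonic functions are precisely $h = \|x\|^{2}/2 - (u_{0}+c) = H^{\Omega}_{\|x\|^{2}/2} - c$ as $c$ ranges over $\mathbb{R}$, which is the stated form (with the sign of the constant absorbed). No step in this argument is genuinely difficult; all of the analytic substance has already been packaged into Proposition \ref{Bpm}, and the only remaining task is to recognise that the classical Dirichlet solution for $\Delta u = N$, $u|_{\partial\Omega}=0$, satisfies the abstract characterisation (\ref{uchar}).
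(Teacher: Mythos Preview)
Your proof is correct, but it takes a different route from the paper's. The paper argues directly from the first-variation condition: for a minimiser $h$ and any harmonic $k\in C^{1}(\overline{\Omega})$ one has $\int_{\Omega}(x-\nabla h)\cdot\nabla k\,dm=0$, and the divergence theorem converts this into the boundary identity $\int_{\partial\Omega}\bigl(\|x\|^{2}/2-h\bigr)\tfrac{\partial k}{\partial n}\,d\sigma=0$. Since the Neumann problem is solvable for any smooth data with zero mean, this forces $\|x\|^{2}/2-h$ to be constant on $\partial\Omega$, and the conclusion follows. Your argument instead exploits Proposition~\ref{Bpm}(iv): you exhibit the candidate $u_{0}=\|x\|^{2}/2-H^{\Omega}_{\|x\|^{2}/2}\in W_{0}^{1,2}(\Omega)$, verify $\nabla u_{0}\in B_{2}(\Omega)^{\perp}$ by approximating in $W_{0}^{1,2}$ with $C_{c}^{\infty}$ functions and integrating by parts, and then let the abstract uniqueness in (\ref{uchar}) do the work. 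Your approach is more economical in that it recycles Proposition~\ref{Bpm} rather than re-deriving the variational identity, while the paper's approach makes the PDE mechanism (the Neumann problem on $\partial\Omega$) explicit and avoids the approximation step.
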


\begin{proof}
Let $h\in W^{1,2}(\Omega )$ be a harmonic function satisfying $\lambda
_{B_{2}}(\Omega )=\Vert x-\nabla h\Vert _{\mathcal{L}_{2}}$, and let $k\in
C^{1}(\overline{\Omega })\ $be harmonic on $\Omega $. Since the function $%
t\mapsto \Vert x-\nabla (h+tk)\Vert _{2}^{2}$ has a minimum at $t=0$, we see
that 
\begin{equation*}
\int_{\Omega }(x-\nabla h)\cdot \nabla k~dm=0.
\end{equation*}%
Hence, by the divergence theorem, 
\begin{equation}
\int_{\partial \Omega }\left( \frac{\Vert x\Vert ^{2}}{2}-h\right) \frac{%
\partial k}{\partial n}~d\sigma =0,  \label{bdyc}
\end{equation}%
where $\sigma $ denotes surface area measure. Since we can solve the Neumann
problem 
\begin{equation*}
\left\{ 
\begin{array}{ll}
\Delta k=0 & \text{ in }\Omega \\ 
\dfrac{\partial k}{\partial n}=\phi & \text{ on }\partial \Omega%
\end{array}%
\right.
\end{equation*}%
for any smooth function $\phi $ satisfying $\int_{\partial \Omega }\phi
d\sigma =0$, we see from (\ref{bdyc}) that $\left\Vert x\right\Vert
^{2}/2-h(x)\mathrm{\ }$is constant on $\partial \Omega .$
\end{proof}

\bigskip

The \textit{torsional rigidity of }$\Omega $ is defined by%
\begin{equation*}
\rho (\Omega )=\int_{\Omega }\Vert \nabla v\Vert ^{2}dm,
\end{equation*}%
where $v$ is the solution to the Dirichlet problem%
\begin{equation}
\left\{ 
\begin{array}{ll}
-\Delta v=N & \text{ in }\Omega \\ 
v=0 & \text{ on }\Gamma _{0} \\ 
v=c_{i} & \text{ on }\Gamma _{i}\text{ for }i=1,\ldots ,j%
\end{array}%
\right. ;  \label{ci}
\end{equation}%
here $\Gamma _{0}$ is again the boundary of the unbounded component of $%
\mathbb{R}^{N}\backslash \Omega $, while $G_{1},G_{2},\ldots ,G_{j}$ are the
bounded components of $\Omega ^{c}$ with boundaries $\Gamma _{1},\Gamma
_{2},\ldots ,\Gamma _{j}$ and the constants $c_{i}$ are chosen so that 
\begin{equation}
\int_{\Gamma _{i}}\frac{\partial v}{\partial n}d\sigma =2m(G_{i})\text{ \ \ }%
(i=1,2,\ldots ,j).  \label{ci2}
\end{equation}%
From Proposition \ref{Qq}(iii) we see that 
\begin{equation}
Q_{2}(\Omega )=\sqrt{\rho (\Omega )}  \label{tor}
\end{equation}%
when $\mathbb{R}^{N}\backslash \Omega $ has no bounded components.\bigskip

Theorem \ref{Cor} is contained in the result below.

\begin{theorem}
\label{p2}If $\Omega \subset \mathbb{R}^{N}$ is a smoothly bounded domain,
then%
\begin{equation}
\lambda _{B_{2}}(\Omega )=\lambda _{A_{2}}(\Omega )=\lambda _{D_{2}}(\Omega
)=Q_{2}(\Omega ).  \label{L2eq}
\end{equation}%
Further, these quantities are equal to $\sqrt{\rho (\Omega )}$ if and only
if $\mathbb{R}^{N}\backslash \Omega $ is connected.
\end{theorem}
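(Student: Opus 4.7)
The plan is to use the results of Section~\ref{S3} to pin down the common extremal element for all three $\lambda$-quantities at $p=2$, and then to compare the resulting value with $\sqrt{\rho(\Omega)}$ by decomposing the torsion function $v$.

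Proposition~\ref{LD} applied with $p = q = 2$ already gives $\lambda_{D_2}(\Omega) = Q_2(\Omega)$, and (\ref{Lineq}) supplies $\lambda_{D_2}(\Omega) \leq \lambda_{A_2}(\Omega) \leq \lambda_{B_2}(\Omega)$. To collapse this chain to (\ref{L2eq}), I will show that the $\lambda_{D_2}$-minimizer in fact lies in $B_2(\Omega)$. By Proposition~\ref{Dp}, the minimizer $f_0 \in D_2(\Omega)$ satisfies $x - f_0 = -\nabla u_0$ where $u_0 = c w_2$ for some $c>0$; hence $f_0 = \nabla(\Vert x \Vert^2/2 + c w_2)$. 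Imposing $\func{div} f_0 = 0$ in the sense of distributions forces the constant $N + c\Delta w_2 = N - c$ to vanish, so $c = N$, and the potential $H := \Vert x \Vert^2/2 + N w_2$ is then harmonic. Thus $f_0 = \nabla H \in B_2(\Omega)$, giving $\lambda_{B_2}(\Omega) \leq \Vert x - f_0\Vert_{\mathcal{L}_2} = \lambda_{D_2}(\Omega)$, and the four quantities coincide.

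For the second assertion, Proposition~\ref{Qq}(iii) gives $Q_2(\Omega)^2 = N^2 \int_\Omega w_2 \, dm = N^2 \Vert \nabla w_2 \Vert_{\mathcal{L}_2}^2$, while $\rho(\Omega) = \int_\Omega \Vert \nabla v \Vert^2 dm$ for the torsion function $v$ solving (\ref{ci}). I will decompose $v = N w_2 + u$, where $u$ is harmonic on $\Omega$ with $u = 0$ on $\Gamma_0$ and $u = c_i$ on $\Gamma_i$. Using that $w_2 \in W_0^{1,2}(\Omega)$ and that $C_c^\infty(\Omega)$ is dense in $W_0^{1,2}(\Omega)$, I approximate $w_2$ by test functions against the harmonic $u$ to get $\int_\Omega \nabla w_2 \cdot \nabla u \, dm = 0$, so
\begin{equation*}
\rho(\Omega) = N^2 \int_\Omega \Vert \nabla w_2 \Vert^2 dm + \int_\Omega \Vert \nabla u \Vert^2 dm = Q_2(\Omega)^2 + \Vert \nabla u \Vert_{\mathcal{L}_2}^2 .
\end{equation*}
When $\mathbb{R}^N\setminus\Omega$ is connected there are no bounded components $G_i$, so (\ref{ci}) reduces to the homogeneous Dirichlet problem, whence $v = N w_2$, $u \equiv 0$, and $\rho(\Omega) = Q_2(\Omega)^2$. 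Conversely, if a bounded component $G_i$ exists, $u \equiv 0$ would force $c_i = 0$ and hence $v = N w_2$; but Hopf's lemma gives $\partial w_2/\partial n \leq 0$ on $\Gamma_i$, contradicting the flux requirement (\ref{ci2}) that $\int_{\Gamma_i} \partial v/\partial n \, d\sigma = 2 m(G_i) > 0$.

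The main obstacle is less a single hard step than careful book-keeping at two junctions: first, verifying that the constant $c$ produced by Proposition~\ref{Dp}(iv) is \emph{exactly} $N$ so that the minimizer's potential is genuinely harmonic (and the $D_2$-minimizer lives in $B_2$); and second, using the sign information from Hopf's lemma together with the flux normalization (\ref{ci2}) to exclude the degenerate possibility $u \equiv 0$ when the complement has bounded components.
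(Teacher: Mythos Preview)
Your proof is correct; it reaches the same conclusions as the paper but by a somewhat different route in both halves.

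For (\ref{L2eq}) the paper works from the $B_2$-end: Theorem~\ref{main2'} identifies the $B_2$-minimizer as $H_{\Vert x\Vert ^2/2}^{\Omega}$, and the associated function $u = H_{\Vert x\Vert ^2/2}^{\Omega} - \Vert x\Vert ^2/2 \in W_0^{1,2}(\Omega)$ is then fed into the quotient (\ref{Qdef}) to obtain $\lambda_{B_2}(\Omega)\le Q_2(\Omega)$. You instead work from the $D_2$-end, pinning down the $D_2$-minimizer via Proposition~\ref{Dp} and checking that it already lies in $B_2(\Omega)$. The two arguments are dual and converge on the same extremal: your potential $H=\Vert x\Vert ^2/2 + Nw_2$ is harmonic with boundary values $\Vert x\Vert ^2/2$, hence equals $H_{\Vert x\Vert ^2/2}^{\Omega}$. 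Your version has the minor advantage of not needing Theorem~\ref{main2'}. For the second assertion the paper also uses Hopf's lemma, but in a different way: it shows that the constants $c_i$ in (\ref{ci}) are all strictly positive when $\mathbb{R}^N\setminus\Omega$ is disconnected, concludes that $v$ is not a multiple of $w_2$, and then appeals to Proposition~\ref{Qq}. Your orthogonal decomposition $v = Nw_2 + u$ instead yields the exact identity $\rho(\Omega) = Q_2(\Omega)^2 + \Vert\nabla u\Vert_{\mathcal L_2}^2$, from which the ``if and only if'' together with the direction $\rho(\Omega)\ge Q_2(\Omega)^2$ and the size of the gap fall out at once.
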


\begin{proof}[Proof of Theorem \protect\ref{p2}]
Let $u(x)=$ $H_{\Vert x\Vert ^{2}/2}^{\Omega }-\left\Vert x\right\Vert
^{2}/2 $. By Theorem \ref{main2'},%
\begin{equation*}
\left( \lambda _{B_{2}}(\Omega )\right) ^{2}=\int_{\Omega }\Vert x-\nabla
H_{\Vert x\Vert ^{2}/2}^{\Omega }\Vert ^{2}dx=\int_{\Omega }\Vert \nabla
u\Vert ^{2}dx=N\int_{\Omega }u~dx,
\end{equation*}%
where for the last step we applied the divergence theorem and noted that $%
u=0 $ on $\partial \Omega $. Hence%
\begin{equation*}
\lambda _{B_{2}}(\Omega )=\Vert \nabla u\Vert _{\mathcal{L}_{2}}=\frac{%
N\int_{\Omega }u~dm}{\Vert \nabla u\Vert _{\mathcal{L}_{2}}}\leq
Q_{2}(\Omega ).
\end{equation*}%
Equation (\ref{L2eq}) now follows from Proposition \ref{LD} and (\ref{Lineq}%
).

We know from (\ref{tor}) that $Q_{2}(\Omega )=\sqrt{\rho (\Omega )}$ if $%
\mathbb{R}^{N}\backslash \Omega $ is connected. Conversely, suppose that $%
\mathbb{R}^{N}\backslash \Omega $ is not connected, and let $c_{k}=\min
\{c_{1},...,c_{j}\}$. If $c_{k}\leq 0$, then the Hopf boundary point lemma
(see Section 6.4.2 of \cite{Ev}) would tell us that $\partial v/\partial n<0$
on $\Gamma _{k}$, which contradicts (\ref{ci2}). Thus $c_{i}>0$ $(i=1,...,j)$
in (\ref{ci}), so $v$ cannot be a multiple of $w_{2}$, and it now follows
from Proposition \ref{Qq}\ that $Q_{2}(\Omega )>\sqrt{\rho (\Omega )}$.
\end{proof}

\bigskip

\end{document}